\documentclass[a4paper,10pt,twoside]{amsart}
\usepackage{amsmath,amssymb,amsthm,hyperref,mathtools,esint,enumitem,accents}
\usepackage[svgnames]{xcolor}
\setlength{\parindent}{0pt}
\hypersetup{linktoc=all,colorlinks=true,linkcolor=Brown,citecolor=Maroon,urlcolor=DarkBlue} 

\usepackage[margin=1.5in, marginparwidth=1.2in]{geometry}

\newcommand{\dx}[1]{\,\mathrm{d}#1}  

\DeclareMathAlphabet{\mathlcal}{U}{dutchcal}{m}{n} 

\usepackage{tikz-cd}
\usepackage{cases}
\makeatletter
\@namedef{subjclassname@2020}{\textup{2020} Mathematics Subject Classification}
\makeatother
%
\setcounter{tocdepth}{2}
%

\usepackage[capitalize]{cleveref}
\newtheorem{theorem}{Theorem}[section]
\newtheorem*{theorem*}{Theorem}

\newtheorem{lemma}[theorem]{Lemma}
\theoremstyle{remark}
\newtheorem{remark}[theorem]{Remark}
\theoremstyle{definition}

\newtheorem{assumption}[theorem]{Assumption}

\newtheorem*{duplicate*}{{\hypersetup{hidelinks}Assumption~\ref{ass}}}

\newtheorem*{nota*}{Notation}

\Crefname{assumption}{Assumption}{Assumptions}

\usepackage{autonum} 

\numberwithin{equation}{section}

\usepackage{etoolbox} 
\expandafter\patchcmd\csname\string\proof\endcsname
  {\normalparindent}{0pt }{}{}


\newcommand{\T}{\mathbb{T}}

\newcommand{\eps}{\varepsilon}

\newcommand{\id}{{\rm id}}

\newcommand{\N}{\mathbb{N}}

\newcommand{\R}{\mathbb{R}}

\newcommand{\Z}{\mathbb{Z}}

\newcommand{\cP}{\mathcal{P}}


\DeclareMathAlphabet{\mathup}{OT1}{\familydefault}{m}{n}




%
%
%
\title[Sharp mean-field estimates for the repulsive log gas]{Sharp mean-field estimates for the repulsive log gas in any dimension}
\author[M. G. Delgadino]{Matias G. Delgadino}
\address{Department of Mathematics, University of Texas at Austin, Austin, TX 78712, USA}
\email{matias.delgadino@math.utexas.edu}
\author[R. S. Gvalani]{Rishabh S. Gvalani}
\address{Department of Mathematics, ETH Zürich, Rämistrasse 101, 8092 Zürich, Switzerland}
\email{rishabhsunil.gvalani@math.ethz.ch}
\date{\textcolor{Maroon}{\today}}
%
\keywords{ Coulomb gas, mean-field limit, sharp rate}
\subjclass[2020]{} 
\allowdisplaybreaks
\begin{document}
\begin{abstract}
We prove sharp estimates for the mean-field limit of weakly interacting diffusions with repulsive logarithmic interaction in arbitrary dimension. More precisely, we show that the associated partition function is uniformly bounded in the number of particles $N$ for an arbitrary bounded base measure. Combined with the modulated free energy method, this amounts to a logarithmic improvement in $N$ of the current best available closeness estimates in the literature. Our arguments are inspired by and borrow ideas from Nelson's classical construction of the $\varphi^4_2$ Euclidean quantum field theory.
\end{abstract}
\maketitle

\tableofcontents
\section{Introduction} 
This article aims to obtain sharp estimates for the mean-field convergence of interacting particle systems with repulsive logarithmic interaction in arbitrary dimension, improving the best available estimates \cite{BJW1,serfaty2020mean,cai2024propagation} by a logarithm in the number of particles. More specifically, we consider a large number $N \geq 1$ of identical interacting particles $X_t=(X_t^1,\dots,X_t^N) \in \Omega^N$ in some domain $\Omega=\T^d,\R^d$, whose motion is described by the following system of interacting stochastic differential equations (SDEs)
\begin{equation}\label{particles}
    dX_t^i=-\nabla V(X_t^i)-\frac{1}{N}\sum_{j\ne i,j=1}^N \nabla_1 W(X_t^i,X_t^j)+\sqrt{2}\, dW_t^i\,
\end{equation}
where $(W_t^i)_{i=1}^N$ is a family independent Brownian motions,  $V:\Omega\to\R$ is a confining potential, and $W:\Omega\times\Omega\to \R$ is a pairwise interaction potential which is symmetric $W(x,y)=W(y,x)$.

The convergence of exchangeable particle systems to their mean-field limit is a classical topic that has attracted the attention of mathematicians for decades. We refer the reader to \cite{Sznitman1991} for a classical reference and \cite{ChaintronDiez2022a,ChaintronDiez2022b} for a more modern review. It is well-known that under regularity conditions on the potentials, the law of $X_t\sim \rho^N_{t}\in C([0,\infty);\mathcal{P}(\Omega^N))$ can be well approximated by $N$ independent copies the McKean--Vlasov equation. More specifically, we have the bound
\begin{equation}\label{eq:meanfield}
  \overline{\mathcal{H}}(\rho^N_t|\bar{\rho}^{\otimes N}_t)\le e^{Ct}\left(\overline{\mathcal{H}}(\rho^N_{0}|\bar\rho^{\otimes N}_{0})+\frac{1}{N}\right) \, ,
\end{equation}
where $\overline{\mathcal{H}}(\cdot|\cdot)$ is the dimension-averaged relative entropy and $\rho^{}_t$ is the solution of McKean--Vlasov equation
\begin{equation}
\label{eq:MVpde}
\begin{cases}
\partial_t \bar\rho_t = \Delta \bar\rho_t + \nabla \cdot (\bar\rho_t(\nabla W \star \bar\rho_t+\nabla V)) \, ,\\
\bar\rho_0=\bar\rho_{0}.
\end{cases} 
\end{equation}
Note that we use `$\star$' to denote the following generalised version of the convolution:
\begin{equation}
  (W \star \eta)(x)\coloneqq \int_{\Omega} W(x,y) \, \mathrm{d}\eta(y) \, , 
\end{equation}
for any distribution $\eta$, whenever the above pairing is well-defined. Since we wish to focus only on the singularity of $W$, we will assume throughout the text that $W$ is repulsive in the sense of $H$-stability: for any $\varphi \in C^\infty_c(\Omega)$ with $\int_{\Omega} \varphi =0$, we have that
\begin{equation}
 \int_{\Omega^2} W(x,y) \varphi(x)\varphi(y) \, \dx x\,\dx{y} \geq 0 \, . 
\end{equation} 
In a companion paper, we will show how one can extend some of the results of this article to the case of attractive interactions (see \cite{Delgadino2025}).

In recent years, several advances have been made in the derivation of mean-field limits for singular interactions. We mention in particular the seminal work of Jabin and Wang \cite{jabin2018quantitative} who, using relative entropy methods, were able to consider interactions given by divergence-free vector fields in $W^{-1,\infty}$. A few years later, a breakthrough of Serfaty~\cite{serfaty2020mean} showed a quantitative mean field limit for the repulsive log gas, even in the case without noise. Harnessing this breakthrough Bretsch, Jabin, and Wang \cite{BJW1} were able to combine the two above methods into  the so-called modulated free energy method.

To illustrate the novelty of our results, we now focus on the specific case of the two-dimensional repulsive log gas case (see the original references \cite{Dys62,Meh04,Mar88,MY80} and the lecture notes \cite{serfaty2024lectures} for an introduction). More specifically,  we consider $\Omega=\T^2$, a vanishing confining potential $V=0$, and the interaction potential  given by $W(x,y)=F(x-y)=(-\Delta)^{-1}(\delta_0-1)(x-y)$, the Green's function of the Laplacian on $\T^2$ with zero average. In this case, it was shown by \cite{serfaty2020mean,BJW19a} that 
\begin{equation}\label{eq:meanfieldlog}
\overline{\mathcal{H}}(\rho^N_t|\bar\rho^{\otimes N}_t)\le e^{Ct}\left(\overline{\mathcal{H}}(\rho^N_{0}|\bar\rho^{\otimes N}_{0})+\frac{\log N}{N}\right)\, .
\end{equation}
We note that the difference between \eqref{eq:meanfield} and \eqref{eq:meanfieldlog} is given by the factor $\log N$ in the right-hand side, which stems from the singularity of the interaction potential. In this article we improve \eqref{eq:meanfieldlog} by getting rid of the logarithmic factor, thus recovering the rate in the smooth interaction case \eqref{eq:meanfield}. We follow the modulated free energy approach of the cited references \cite{serfaty2020mean,BJW19a}, with the improvement stemming from a sharper lower bound on the modulated free energy. Even though the exposition here focuses on $\T^2$ and the Coulomb gas, our proof works for any setting in which the regularity allows for the modulated free energy method to be applied as our bounds rely only on the $L^\infty$-norm on $\bar{\rho}_t$.

 \subsection{Modulated free energy method}
We give a short outline of the  modulated free energy method. We refer the reader to the lecture notes Bretsch, Jabin, and Wang \cite{BJW19a} for more details. The starting point is to consider the modulated free energy
\begin{equation}\label{eq:modulatedenergy}
F^N(t)=\overline{\mathcal{H}}(\rho^N_t|\bar\rho^{\otimes N}_t)+\frac{1}{N}\int_{\Omega^N}\mathring{I}_W[\eta^N_x]\;\dx \rho^N_t(x),
\end{equation}
where, for any $x=(x_1,\dots,x_N)\in \Omega_N$, we define
\begin{equation}
  \eta^N_x \coloneqq N^{-\frac12}\left(\sum_{i=1}^N \delta_{x_i}-\bar\rho\right) \, , \quad \mathring{I}_W[\eta]=\frac{1}{2}\int_{\Omega^2\setminus \Delta} W(x,y)\;\dx\eta(x)\dx\eta(y)\, ,
  \label{eq:fluctuation}
\end{equation}
with $\Delta=\{(x,y)\in \Omega:x=y\}$.
Differentiating the free energy, one obtains the estimate
\begin{equation}\label{eq:dissipation}
  \frac{\rm d}{\rm dt}F^N(t)\le -\frac{1}{N} \int_{\Omega^N}\mathring{I}_{Q_{\bar\rho_t}}[\eta^N_x]\;\dx \rho^N_t(x),
\end{equation}
where 
$$
Q_{\bar\rho_t}(x,y)=\nabla_1 W(x,y)\cdot \left(\nabla\log\bar\rho_t(x)-\nabla\log\bar\rho_t(y)+\nabla W\star \bar{\rho}_t(x)-\nabla W\star \bar{\rho}_t(y)\right)\, ,
$$
(see \cite[Equation 2.6]{BJW19a}). The original observation of \cite{Jabin2016} was to notice that under a Lipschitz bound on $\log\bar\rho_t$ (see for instance \cite[Theorem 3.1]{cai2024propagation}), and the bound $|\nabla W(x,y)|\le \frac{1}{|x-y|}$, which is the case of the logarithmic potential, we have that 
\begin{equation}\label{eq:Qbound}
\|Q_{\bar\rho_t}(x,y)\|_\infty<\infty.
\end{equation}
Following \cite{Jabin2016}, we use the Donsker--Varadhan bound for the right hand of \eqref{eq:dissipation}
\begin{equation}\label{eq:inequality}
  -\frac{1}{N} \int_{\Omega^N}\mathring{I}_{Q_{\bar\rho_t}}[\eta^N_x]\;\dx \rho^N_t(x)\le \frac{1}{\delta}\left(\overline{\mathcal{H}}(\rho^N_t|\bar\rho^{\otimes N}_t) +\frac{1}{N}\underbrace{\log \left(\int_{\Omega^N} e^{-\delta \mathring{I}_{Q_{\bar\rho_t}}[\eta^N_x]} \;\dx\bar\rho^{\otimes N}(x)\right)}_{R_\delta(t)}\right).
\end{equation}
Using \eqref{eq:Qbound}, we can apply \cite[Theorem 4]{Jabin2016} to say that there exists $\delta>0$ such that
\begin{equation}\label{eq:boundR}
  R_\delta(t)\le C.
\end{equation}
Applying \eqref{eq:boundR} and \eqref{eq:inequality} to \eqref{eq:dissipation}, we obtain the estimate
\begin{equation}\label{eq:dissipation2}
  \frac{\dx}{\dx{t}}F^N(t)\le C\overline{\mathcal{H}}(\rho^N_t|\bar\rho^{\otimes N}_t) +\frac{C}{N}.
\end{equation}
To complete a standard Gronwall argument, we need to lower bound the modulated free energy $F^N(t)$ by the relative entropy $\overline{\mathcal{H}}(\rho^N_t|\bar\rho^{\otimes N}_t)$. So far, a control of this type stems from pointwise bound shown by Serfaty in \cite[Corollary 3.5]{serfaty2020mean}
\begin{equation}\label{eq:pointwise}
  -\frac{1}{2N}\mathring{I}_W[\eta^N_x]\le C \frac{\log N}{N},
\end{equation}
where $C$ depends on the upper bound of the density of $\bar{\rho}_t$. The bound \eqref{eq:pointwise} is responsible for the logarithm in the final estimate \eqref{eq:meanfieldlog}. A contribution of this paper is to show that the bound \eqref{eq:pointwise} is actually not saturated in an integral sense, and we can obtain the sharp estimate \eqref{eq:meanfield} for repulsive logarithmic interactions. We now state the main result of this paper which provides an improvement of this bound for general base measures $\bar{\rho}\in \cP(\Omega)$ with bounded densities (see~\cref{thm:main} for a precise version).
\begin{theorem}\label{lem:energybound}
 Given $\bar\rho\in\mathcal{P}(\Omega)$ and $\beta>0$, there exists 
 \[
   C=C\left(\beta,\left \lVert \frac{\dx{\bar{\rho}}}{\dx{x}} \right \rVert_{L^\infty(\Omega)}\right) \, ,
 \] 
 such that,  for every $N\in\N$ and $\rho^N\in\mathcal{P}(\Omega^N)$, we have the bound
\begin{equation}\label{eq:integral}
  -\frac{\beta}{N}\int_{\Omega^N}\mathring{I}_W[\eta^N_x]\;\dx \rho^N(X)\le \frac{1}{2}\overline{\mathcal{H}}(\rho^N|\bar\rho^{\otimes N})+\frac{C}{N}.
\end{equation}
\end{theorem}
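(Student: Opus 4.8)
The plan is to exploit the repulsivity ($H$-stability) of $W$ together with the freedom of choosing the reference measure in the Donsker--Varadhan variational characterization of relative entropy. The key point is that, since $\mathring{I}_W[\eta]$ differs from the genuinely positive quadratic form $\frac12\int_{\Omega^2} W \dx\eta\dx\eta$ only by the removal of the diagonal, we have the identity
\begin{equation*}
  -\mathring{I}_W[\eta^N_x] = -\frac{1}{2}\int_{\Omega^2} W(x,y)\dx\eta^N_x(x)\dx\eta^N_x(y) + \frac{1}{2N}\sum_{i=1}^N W(x_i,x_i) \, ,
\end{equation*}
where the last term is where the logarithmic divergence (formally $W(x,x)=+\infty$) would come from if one were not careful. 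The whole game is therefore to show that the exponential moment $\int_{\Omega^N} e^{-\beta \mathring{I}_W[\eta^N_x]}\dx\bar\rho^{\otimes N}$ is bounded uniformly in $N$ after a suitable regularization: by Donsker--Varadhan,
\begin{equation*}
  -\frac{\beta}{N}\int_{\Omega^N}\mathring{I}_W[\eta^N_x]\dx\rho^N \le \frac{1}{2}\overline{\cH}(\rho^N|\bar\rho^{\otimes N}) + \frac{1}{N}\log\int_{\Omega^N} e^{-2\beta \mathring{I}_W[\eta^N_x]}\dx\bar\rho^{\otimes N}(x) \, ,
\end{equation*}
so \cref{lem:energybound} reduces to the claim that the log-partition function on the right is $O(1)$ with constant depending only on $\beta$ and $\|\bar\rho\|_{L^\infty}$. (I would absorb the factor $\tfrac12$ as above, or more simply keep it general and track constants.)

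To bound the partition function I would borrow, as the abstract advertises, Nelson's strategy for constructing $\varphi^4_2$: approximate $W$ by a smooth mollification $W_\eps = W \star \chi_\eps$ (at the level of the Green's function, a cutoff of high Fourier modes), write $\mathring{I}_{W_\eps}[\eta^N_x]$ as a polynomial-type functional of the empirical field, and control it via hypercontractivity / Nelson-type exponential estimates. Concretely: first handle the diagonal. One shows that the ``diagonal counterterm'' $\frac{1}{2N}\sum_i W_\eps(x_i,x_i) = \frac12 W_\eps(0)$ behaves like $c\log(1/\eps)$, and one chooses $\eps = \eps_N \sim 1/N$ so that this counterterm is exactly the $\log N$ appearing in Serfaty's pointwise bound \eqref{eq:pointwise}; the improvement over \eqref{eq:pointwise} comes precisely from the fact that in the \emph{exponential moment} this deterministic shift can be compensated, because the fluctuating part $-\frac12\int W_\eps \dx\eta^N_x\dx\eta^N_x$ has a Gaussian-type tail and is on average \emph{negative} (by $H$-stability its expectation under $\bar\rho^{\otimes N}$ is $\le 0$). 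Then one estimates, uniformly in $N$, the Laplace transform $\E_{\bar\rho^{\otimes N}}[e^{-\beta\mathring{I}_{W_\eps}[\eta^N_x]}]$ using the sub-Gaussian concentration of $\int W_\eps \dx\eta^N_x\dx\eta^N_x$ around its mean (a bounded Lipschitz functional of i.i.d.\ bounded variables, handled e.g.\ by a chaos expansion plus Nelson's hypercontractive bound, or by a martingale/log-Sobolev argument), the $L^\infty(\bar\rho)$ bound entering through the variance estimate $\|W_\eps\star\bar\rho\|_{L^\infty} \lesssim \log(1/\eps)\|\bar\rho\|_{L^\infty}$.

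The final step is to remove the regularization: one must pass from $W_\eps$ back to $W$ inside the partition function, i.e.\ show $\mathring{I}_W[\eta^N_x] \ge \mathring{I}_{W_\eps}[\eta^N_x] - (\text{error})$ with an error that is uniformly bounded in $N$ for the optimal choice $\eps \sim 1/N$. This is where repulsivity is essential: $W - W_\eps$ should again be (almost) a positive-definite kernel in the $H$-stability sense up to a controllable diagonal term, so that dropping it only helps, or costs at most $O(1)$ after the diagonal subtraction. I expect \textbf{this matching of the mollified and unmollified energies, with the diagonal renormalization carried out at scale $\eps_N\sim 1/N$ and the error stays $O(1)$}, to be the main obstacle: it is exactly the place where the ``integral sense'' improvement over \eqref{eq:pointwise} is bought, and it requires combining the sign of the interaction with a sharp accounting of the self-energy $W_\eps(0)$ against the concentration scale of the empirical measure. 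Everything else---Donsker--Varadhan, $H$-stability giving the right sign of the mean, and Nelson-type exponential integrability of the smooth truncation---is comparatively standard once the scales are correctly matched.
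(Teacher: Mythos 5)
Your opening moves match the paper: Donsker--Varadhan reduces \eqref{eq:integral} to the uniform-in-$N$ bound on the partition function $Z_{N,2\beta}$, and the broad strategy (mollify $W$, renormalise the diagonal, use repulsivity for the sign) is the right one. The gap is in how you propose to close the bound on the exponential moment, and it occurs at exactly the step you flag as the ``main obstacle''. First, fixing a single mollification scale $\eps_N\sim 1/N$ does not work. With that choice the comparison $\mathring{I}_W[\eta^N_x]\ge \mathring{I}_{W_{\eps_N}}[\eta^N_x]-O(1)$ is indeed available (this is essentially \cref{lem:coulombapriori}), but the remaining task, bounding $\mathbb{E}\bigl[e^{-2\beta \mathring{I}_{W_{\eps_N}}}\bigr]$ uniformly in $N$, is not easier than the original problem: the diagonal counterterm still contributes $\sim \ln(1/\eps_N)\sim\ln N$, so $\mathring{I}_{W_{\eps_N}}$ is bounded below only by $-C\ln N$ and its exponential is as large as $N^{2\beta C}$ on a bad event. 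Generic concentration for the degenerate U-statistic $\tfrac1N\sum_{i\neq j}G_{\eps_N}(X_i,X_j)$ (Jabin--Wang type bounds, bounded-differences, etc.) requires $\beta\lVert G_{\eps_N}\rVert_\infty$ small, and here $\lVert G_{\eps_N}\rVert_\infty\sim\ln N\to\infty$; the functional is neither bounded nor Lipschitz uniformly in $N$, and it is not sub-Gaussian (second-chaos objects have at best exponential tails). To beat the $N^{2\beta C}$ worst case one must show that the bad event has probability smaller than any fixed power of $N$, which forces arbitrarily high moments and a mollification scale that \emph{varies with the tail level}: this is the actual content of Nelson's argument, implemented in the paper via the layer-cake formula $Z_N=\int_0^\infty \mathbb{P}(M(X)\le -\ln t)\,\dx{t}$ with $\eps=\eps(t)=(e/t)^{1/C_2}$, so that $\mathbb{P}(M\le-\ln t)\le\mathbb{P}(M-M_{\eps(t)}\le-1)\le \mathbb{E}[|M-M_{\eps(t)}|^p]$, and the $p$-th moment of the \emph{difference} decays like a power of $\eps(t)$ (hence of $1/t$) by the Besov estimate \eqref{eq:Besovestimate}.

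Second, your fallback tools for the moment/tail estimate --- ``chaos expansion plus Nelson's hypercontractive bound'' --- are not available here, and the paper explicitly identifies this as the point of departure from Nelson: hypercontractivity estimates high moments by second moments only for Gaussian (or otherwise hypercontractive) reference measures, whereas $\eta^N_X$ is only asymptotically Gaussian. The paper replaces it with a hand-crafted combinatorial correlation inequality (\cref{lem:corineq}) giving $\mathbb{E}\bigl[|\tfrac1N\sum_{i\ne j}G(X_i,X_j)|^p\bigr]\lesssim_p N^{-(p-1-\lfloor\gamma p\rfloor)}\sup_x\lVert G(x,\cdot)\rVert_{L^p(\bar\rho)}^p+\sup_x\lVert G(x,\cdot)\rVert_{L^{2(p-\lceil\gamma p\rceil)}(\bar\rho)}^p$, which is the genuinely new technical ingredient your outline is missing. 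Two smaller points: the expectation of $\mathring{I}_W[\eta^N_X]$ under $\bar\rho^{\otimes N}$ is exactly $0$ by the double centering (no $H$-stability needed there --- repulsivity enters only through the deterministic lower bounds); and $W_\eps(x_i,x_i)=\tfrac12 W_\eps(0)$ presumes translation invariance, which the paper deliberately avoids.
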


Integrating \eqref{eq:dissipation2} and using \eqref{eq:integral}, we obtain the estimate
\begin{equation}\label{eq:finalestimate}
  \overline{\mathcal{H}}(\rho^N_t|\bar\rho_t^{\otimes N})\le C\int_0^t \overline{\mathcal{H}}(\rho^N_s|\bar\rho_s^{\otimes N})\;\dx{s}+\overline{\mathcal{H}}(\rho^N_0|\bar\rho_0^{\otimes N}) +\frac{C}{N} \, .
\end{equation}
By a standard Gronwall argument, we obtain the same closeness estimate \eqref{eq:meanfield} as in the smooth case. This is sharp in its dependence on $N$, improving directly the known results on for the repulsive log case by a factor of $\log N$. \cref{lem:energybound} can also be utilized to obtain a logarithmic improvement in $N$ of the estimates of uniform-in-time propagation or generation of chaos. We refer the interested reader to the results in \cite{cai2024propagation,rosenzweig2023modulated}.

\subsection{Method of proof} 
As in \eqref{eq:inequality}, we start by applying the Donsker--Varadhan bound to obtain
\begin{equation}
  -\frac{\beta}{N}\int_{\Omega^N}\mathring{I}_W[\eta^N_x]\;\dx \rho^N(x)\le \frac{1}{2}\overline{\mathcal{H}}(\rho^N|\bar\rho^{\otimes N}) -\frac{1}{2N}\log\left(\underbrace{\int_{\Omega^N}e^{-2\beta \mathring{I}_W[\eta^N_x]}\;\dx\bar\rho^{\otimes N}(x)}_{Z_{N,2\beta}}\right).
\end{equation}
Hence, the result follows if we show that for every $\beta>0$ the partition function 
\begin{equation}\label{eq:partbound}
\sup_{N\in\N}\int_{\Omega^N}e^{-\beta \mathring{I}_W[\eta^N_x]}\;\dx\bar\rho^{\otimes N}(x)=:\sup_{N\in\N}Z_{N,\beta}<\infty
\end{equation}
is uniformly bounded in $N$. The main difference with the case of \eqref{eq:inequality}, is that the interaction potential $Q_{\bar\rho_t}$ is bounded in $L^\infty$, while we allow the interaction potential $W$ to have a logarithmic singularity at the origin.

Classically, the proof of boundedness of partition functions of the type \eqref{eq:partbound} was restricted to the class of continuous interaction potentials (see Ben Arous and Brunaud \cite{BenArous1990}). Later Jabin and Wang \cite{Jabin2016} extended this to the case of potentials with small $L^\infty$ norm. To our knowledge, the only known result showing boundedness of the partition function \eqref{eq:partbound} that considers a potential $W$ with a logarithmic singularity is the work of Grotto and Romito \cite{grotto2020central}. This is done in the case where the interaction potential is exactly the Green's function of the Laplacian on $\T^2$ and the base measure $\bar\rho=\dx x$ is the Lebesgue measure.  This setting allows them to take advantage of the Fourier series representation of the potential and the fact that the base measure is translation invariant. They rely on splitting the Green's function into a short-range Yukawa potential and a long-range regular remainder and controlling their contributions individually. 

In this article, we are agnostic to the specific Fourier representation of the interaction potential and the choice of base measure. Instead, we exploit Besov regularity of the type $W\in \dot{B}^{\frac dp}_{p,\infty}$ for any $p\in(d,\infty)$ to obtain the bound on the partition function (see \cref{ass:potential} and \cref{sec:estimates_for_the_repulsive_log_gas}). A similarity between our approach and the one of Grotto and Romito \cite{grotto2020central} is that we relate this problem to techniques from singular SPDEs and constructive quantum field theory.  More specifically, our method of proof is based on the classical argument of Nelson \cite{nelson1966quartic} for the boundedness of the partition function of the $\varphi^4_2$ Euclidean quantum field theory. For a gentle introduction to Malliavan calculus and Nelson's argument, we recommend Hairer's lecture notes \cite{hairer2021introduction}. The proof presented here sidesteps Malliavan calculus and uses directly the mentioned Besov regularity of the interaction potential along with some explicit correlation inequalities.

\subsection{Nelson's construction of \texorpdfstring{$\varphi^4_2$}{phi4\_2}} 
\label{sub:method_of_proof}
In this section, we show how the structure of $Z_{N,\beta}$ is very similar to that of the partition function of the Euclidean $\varphi^4_2$ measure, i.e.
\begin{equation}
  Z= \mathbb{E}\left[e^{-\int_{\T^2}\gamma^4 \, \dx{x}}\right]  
\end{equation}
where $\gamma$ is sampled from the two-dimensional Gaussian free field. A priori the above quantity does not make sense. The Gaussian free field in $d=2$ is only supported on the Besov space $B^{-\eps}_{\infty,\infty}(\T^d)$ for every $\eps>0$ and, in particular, is not function-valued. Thus, the quantity $\int_{\T^2}\gamma^4\, \dx{x}=\infty$  is almost surely $+\infty$. Hence, constructing the $\varphi^4_2$-measure by bounding the partition function requires renormalisation. This amounts to regularising the free field by mollifying it at some length scale $\eps>0$ and then subtracting appropriately divergent terms. This is akin to removing the self-interactions of the interacting field. Effectively, one replaces $\gamma^4$ with the fourth Wick power $:\gamma^4:$. Using Malliavan calculus one can show that
\begin{equation}
  \sup_{\eps>0}Z_\eps \coloneqq \sup_{\eps>0}\mathbb{E}\left[e^{- \int :\gamma_\eps^4:\, \dx{x}}\right] <\infty \, ,
\end{equation}
thus allowing one to construct the limiting measure.

We now turn to the quantity of interest $Z_{N, \beta}$ and re-express it as an expectation as follows
\begin{equation}
  Z_{N,\beta}= \mathbb{E} \left[e^{-\beta\mathring{I}_W[\eta^N_X]}\right]\, ,
\end{equation}
where $X=(X_1,\dots,X_N)$ is a vector of $N$ independent $\Omega$-valued $\bar\rho$-distributed random variables, and $\eta^N_X$ is as defined in~\eqref{eq:fluctuation}. Thus, as $N\to \infty$,  $\eta^N_X$ converges in law to white noise or, more precisely, to a random variable $\eta_\infty$ whose law is the Gaussian measure with Cameron--Martin space $L^2(\Omega;\bar\rho)$. 

In the case of the Green's function of $\T^2$, by reintroducing the self-interactions in $\mathring{I}_W$, we obtain the $H^{-\frac{d}{2}}$-norm of $\eta^N_X$. As in the $\varphi^4_2$ model, $\eta^N_X$ does not belong to $H^{-\frac{d}{2}}$ for any $N < \infty$. Even in the limit $\eta_\infty$ is at best in $H^{-\frac{d}{2}-\eps}$ for every $\eps>0$, and so $\|\eta_\infty\|_{H^{-\frac{d}{2}}}=\infty$ is almost surely $+\infty$. However, the quantity of interest is $\mathring{I}_W$ in which we have ``removed''  the logarithmic singularity at the diagonal. This is analogous to the renormalisation with the fourth Wick power in $\varphi^4_2$ setting which is key to showing that the partition function is bounded.

In fact, our method of proof will resemble Nelson's proof for controlling $Z_\eps$. The point where our techniques diverge is in the proof of certain higher-order moment bounds. Nelson's argument relies on the fact that the reference measure is Gaussian, and exploits hypercontractivity to estimate higher moments by the second moment. Since the object $\eta^N_X$ is only asymptotically Gaussian we need to estimate correlations by hand. Our main technical contribution is a refined correlation inequality for the higher-order moments of $\mathring{I}_W$ (see \cref{lem:corineq}).

\section{Assumptions and main result} 
\label{sec:preliminaries_and_main_results}
In this section, we introduce the notation and assumptions for our main result \cref{thm:main}. Since we will focus only on bounding the object $Z_{N,\beta}$ for an appropriate choice of base measure $\bar{\rho},W$, we can be more liberal with our choice of domain $\Omega$. Our domain $\Omega$ will either be $\T^d$, $\R^d$, or a bounded open subset of $\R^d$ with smooth boundary. The results can then be applied to the dynamics whenever the modulated free energy method can be made to work,  which to our knowledge has only been studied for $\Omega=\T^d,\R^d$. However, working with a general $\Omega$ allows us to study the convergence of Gibbs measure on general domains (see~\cref{sub:gibbs_measure_and_convergence}).

We will use $P_t$, (resp. $P_t^{\frac 12}$) to denote the heat semigroup (resp. $1/2$-fractional heat semigroup) on $\Omega^2$ with Neumann boundary conditions (whenever $\Omega$ has a boundary). We will often abuse notation by using $P_t$ to denote both the semigroup and the associated kernel. We will also use $\dx{x}$ to denote the Lebesgue measure on both $\Omega,\Omega^N$. It will be clear from context which of the two we are referring to.  We now state the assumptions we make on the interaction potential $W$. 

\begin{assumption}
  \label{ass:potential0}
  We assume that we have an interaction potential $W \in L^1_{\rm loc}(\Omega^2)$ which is lower semicontinous and repulsive/H-stable, i.e. for all $\eps>0$
\begin{equation}
\int_{\Omega^2}(P_\eps W)(x,y)\dx{\eta}(x)\dx{\eta}(y)\geq 0 \, ,
\end{equation}
for all finite signed measures $\eta \in \mathcal{M}(\Omega)$ with $\eta(\Omega)=0$. 
\end{assumption}
\begin{assumption}\label{ass:potential}
We  make the following assumptions on the growth and regularity of $W$ for $d\geq 2$:
\begin{enumerate}
\item Logarithmic upper bound, i.e. there exists a constant $C_0<\infty$ such that for all $x \in \Omega$ and $\eps \in (0,1)$
\begin{equation}
|(P_\eps W)|(x,x)\leq C_0(|\ln|\eps||+1)\, .
\label{eq:logarithmicbound}
\end{equation}
\item Besov-type regularity: We assume that for every measure $\bar{\rho}$ with essentially bounded density and $p \in [1,\infty)$, there exists $\kappa>0$ and a constant $C<\infty$ such that
\begin{equation}
\sup_{\eps> 0}\sup_{x\in \Omega}\lVert (P_\eps- \id)W(x,\cdot) \rVert^p_{L^p(\Omega;\bar{\rho})} < C \eps^{\kappa} \, ,
\label{eq:Besovestimate}
\end{equation}
where
\begin{equation}
  C=C\left(p,\left \lVert\frac{\dx{\bar{\rho}}}{\dx{x}} \right \rVert_{L^\infty(\Omega)}\right)\, .
\end{equation}
\item Quantified super-harmonicity: We assume there exist constants $\alpha,\eps_0>0,K<\infty$ such that
\begin{equation}\label{superharmonicity}
W(x,y)-(P_\eps W)(x,y)\ge -K \eps^\alpha \qquad\mbox{for any $(x,y)\in\Omega^2$ and $t<t_0$} \, .
\end{equation}
\end{enumerate}
For $d=1$, we replace $P_\eps$ by $P_\eps^{\frac 12}$, to avoid facing issues which arise from taking the second derivative of the logarithm in one dimension.
\end{assumption}

\begin{remark}
  ~\cref{ass:potential} mimics the regularity and growth of the log gas, i.e. the choice of $W=(-\Delta)^{-d/2}\delta$, while~\cref{ass:potential0} ensures that the interaction is repulsive. We will show that this example satisfies all the above assumptions in \cref{sec:estimates_for_the_repulsive_log_gas}.
\end{remark}
\begin{remark}
    For $d\ge 2$, the condition~\eqref{superharmonicity} is satisfied as long as the negative part of the Laplacian has some  integrability
    $$
    \sup_y\|\min\{-\Delta_x W(x,y),0\}\|_{L^p(\Omega)}<\infty \qquad\mbox{for some $p>d/2$.}
    $$
    In this case, we have that $\alpha=2-d/p$. 
    
    For $d=1$, we have to consider the $1/2$-Laplacian instead, and check the analogous condition
    $$
    \sup_y\|\min\{(-\Delta_x)^{1/2} W(x,y),0\}\|_{L^p(\Omega)}<\infty \qquad\mbox{for some $p>d$,}
    $$
    to obtain $\alpha=1-d/p$.
\end{remark}

We are now in a position to state our main result.

\begin{theorem}\label{thm:main}
  Suppose \cref{ass:potential0,ass:potential} are satisfied. Then, for any $\beta>0$ and $\bar{\rho}\in \cP(\Omega)$ with essentially bounded density there exists some
\[
  C=C\left(\beta,\left \lVert\frac{\dx{\bar{\rho}}}{\dx{x}} \right \rVert_{L^\infty(\Omega)}\right) \, ,
\]
such that for every $N\in\N$
  \begin{equation}
    1\le Z_{N,\beta}=\int_{\Omega}e^{-\beta\mathring{I}_W[\eta^N_x]}\;\dx\bar\rho^{\otimes N}(x)\le C.
  \end{equation}
  where 
  \begin{equation}
  \eta^N_x \coloneqq N^{-\frac12}\left(\sum_{i=1}^N \delta_{x_i}-\bar{\rho}\right) \, , \quad \mathring{I}_W[\eta]=\frac{1}{2}\int_{\Omega^2\setminus \Delta} W(x,y)\;\dx\eta(x)\dx\eta(y)\, .
\end{equation}
As an immediate consequence, we have that~\eqref{eq:integral} holds true. 
\end{theorem}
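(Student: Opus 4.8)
The lower bound $Z_{N,\beta}\ge 1$ is straightforward: by Jensen's inequality $Z_{N,\beta}=\E\!\left[e^{-\beta\mathring{I}_W[\eta^N_X]}\right]\ge e^{-\beta\,\E[\mathring{I}_W[\eta^N_X]]}$, and a direct expansion of $\mathring{I}_W[\eta^N_X]$ (using that $\bar\rho$ is non-atomic, so that the diagonal carries no $\bar\rho^{\otimes N}$-mass) together with the repulsivity of \cref{ass:potential0} gives $\E[\mathring{I}_W[\eta^N_X]]\le 0$. Thus the whole substance of \cref{thm:main} is the uniform bound $\sup_{N}Z_{N,\beta}<\infty$ of \eqref{eq:partbound}, and \eqref{eq:integral} then follows by the Donsker--Varadhan reduction already recorded in the introduction. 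Fix $\beta>0$ throughout and write $Z_{N,\beta}=\E\!\left[e^{-\beta\mathring{I}_W[\eta^N_X]}\right]$ with $X=(X_1,\dots,X_N)$ an i.i.d.\ $\bar\rho$-sample.

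The plan is to transplant Nelson's argument for the $\varphi^4_2$ partition function to this setting. For a scale $\eps>0$ and a finite signed measure $\eta$ on $\Omega$, set $\mathcal{E}_\eps[\eta]:=\tfrac12\int_{\Omega^2}(P_\eps W)(y,z)\,\dx\eta(y)\,\dx\eta(z)$ and $G_\eps[\eta]:=\tfrac12\int_{\Omega^2\setminus\Delta}(W-P_\eps W)(y,z)\,\dx\eta(y)\,\dx\eta(z)$, so that
\[
  \mathring{I}_W[\eta^N_x]=\mathcal{E}_\eps[\eta^N_x]-\frac{1}{2N}\sum_{i=1}^N(P_\eps W)(x_i,x_i)+G_\eps[\eta^N_x].
\]
This is the particle version of Wick renormalisation: $\mathcal{E}_\eps$ is the regularised (non-renormalised) energy and the middle term removes the divergent self-energy. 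Three facts are then used: (i) $\mathcal{E}_\eps[\eta^N_x]\ge 0$ by the repulsivity of \cref{ass:potential0}, since $\eta^N_x$ has zero total mass; (ii) the counterterm is controlled pointwise, $\bigl|\tfrac{1}{2N}\sum_i(P_\eps W)(x_i,x_i)\bigr|\le\tfrac{C_0}{2}(|\ln\eps|+1)$, by the logarithmic bound \eqref{eq:logarithmicbound}; and (iii) along a dyadic family of scales $\eps_n=2^{-n}$, the increments $G_{\eps_n}[\eta^N_x]-G_{\eps_{n+1}}[\eta^N_x]$ have kernels $\psi_n:=P_{\eps_{n+1}}W-P_{\eps_n}W$ which, by the quantified super-harmonicity \eqref{superharmonicity} and the semigroup property, satisfy $\psi_n\ge -K\,2^{-n\alpha}$, and, by the Besov-type bound \eqref{eq:Besovestimate}, have $\sup_x\|\psi_n(x,\cdot)\|_{L^p(\Omega;\bar\rho)}$ decaying like a power of $2^{-n}$. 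Running Nelson's scheme — discarding $\mathcal{E}_\eps\ge 0$, estimating the counterterm via (ii), telescoping $G_{\eps_n}$ over dyadic scales, and calibrating the cutoff $\eps$ against $N$ — reduces the uniform bound to a probabilistic estimate: that $-\mathring{I}_W[\eta^N_X]$ is large only with small probability, small enough and with smallness improving as $N\to\infty$ (reflecting the shrinking effective range of the point configuration), to control $\E\!\left[e^{-\beta\mathring{I}_W[\eta^N_X]}\right]$ uniformly — exactly as in Nelson's treatment of $Z_\eps$.

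The crux is the probabilistic input feeding this estimate: one needs high-moment bounds $\E\bigl[|G_{\eps_n}[\eta^N_X]-G_{\eps_{n+1}}[\eta^N_X]|^p\bigr]$, and more generally $\E[|\mathring{I}_W[\eta^N_X]|^p]$, that are \emph{uniform in $N$}. This is precisely the step where Nelson invokes Gaussian hypercontractivity to bound high moments by the second moment; since $\eta^N_X$ is only \emph{asymptotically} Gaussian, that tool is unavailable and one must estimate correlations by hand. I would obtain the needed bounds from the correlation inequality \cref{lem:corineq}, which supplies the correct second-chaos-type growth $(Cp)^{p}$ in $p$ together with the appropriate norm of the test kernel, uniformly in $N$; combined with the power-law decay of $\|\psi_n\|$ from \eqref{eq:Besovestimate} this makes the per-scale contributions summable, while the super-harmonicity \eqref{superharmonicity} controls the finest scales deterministically. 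Assembling these estimates — either by a weighted Hölder inequality over the (logarithmically many) dyadic scales or, following Nelson more closely, by a spatial box-decomposition exploiting the near-independence of the particles at the relevant scale — yields $\sup_N Z_{N,\beta}<\infty$, hence \cref{thm:main} and \eqref{eq:integral}.

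I expect \cref{lem:corineq} to be the main obstacle: proving second-chaos-type moment bounds for quadratic functionals of $\eta^N_X$ tested against near-singular kernels, with the sharp dependence on $p$ and on the kernel and crucially \emph{uniformly in $N$}, requires a careful combinatorial analysis of the joint cumulants of the i.i.d.\ sample $(X_i)$, since the Gaussian/hypercontractive shortcut is not available. A secondary but delicate point is the calibration of the cutoff scale $\eps$ against $N$: the counterterm in the decomposition above is only controlled by $|\ln\eps|$, which diverges as $\eps\to 0$, so one must arrange that the atypical particle configurations are rare enough — with rareness improving in $N$ — to absorb this logarithmic loss, which is the essential mechanism underlying Nelson's original estimate.
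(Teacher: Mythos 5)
Your overall architecture coincides with the paper's: a layer-cake/tail estimate for $Z_{N,\beta}$, a deterministic lower bound on the regularised energy $\mathring{I}_{W_\eps}[\eta^N_x]\ge -C|\ln\eps|$ coming from \cref{ass:potential0} and \eqref{eq:logarithmicbound}, control of the difference $\mathring{I}_W-\mathring{I}_{W_\eps}$ via \eqref{superharmonicity} and \eqref{eq:Besovestimate}, and a Chebyshev bound fed by moment estimates for $\frac1N\sum_{i\ne j}G_\eps(X_i,X_j)$. But there are two genuine gaps. First, you defer the correlation inequality entirely ("I would obtain the needed bounds from \cref{lem:corineq}\dots I expect it to be the main obstacle"); since that lemma is the technical core of the paper's proof of \cref{thm:main} — everything else is comparatively soft — a proposal that assumes it has assumed the hard part. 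You also mischaracterise what it must deliver: you ask for $(Cp)^p$ growth in $p$, i.e.\ you implicitly plan to send $p\to\infty$ with the tail level as in Nelson's hypercontractive argument. The paper instead fixes a single $p=\bar p$, and the actual content of \cref{lem:corineq} is the trade-off, over the number $\ell$ of active particles in the expansion of the $p$-th moment, between the combinatorial gain $N^{\ell-p}$ and the loss in integrability exponent: the surviving norm for large $\ell$ is $\sup_x\lVert G(x,\cdot)\rVert_{L^{2(p-\lceil\gamma p\rceil)}}$, not $L^p$. Since $\sup_x\lVert G_\eps(x,\cdot)\rVert_{L^q}\sim\eps^{\kappa/q}$, which norm survives dictates which power of $\eps$ you get, and this is precisely what makes the final calibration close.

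Second, the assembly is only gestured at. The paper's mechanism is: truncate the layer-cake integral at $t=N^{C_1}$ using $\mathring{I}_W[\eta^N_x]\ge -C_1\ln N$ (itself obtained by optimising $\eps$ against $N$); then, for each $t$, choose $\eps=(e/t)^{1/C_2}$ so that the event $\{\mathring{I}_W[\eta^N_X]\le-\ln t\}$ forces the $O(1)$ deviation $\mathring{I}_W-\mathring{I}_{W_\eps}\le-1$; then apply Chebyshev with fixed $\bar p,\bar\gamma$ chosen so that one term of \eqref{eq:corineq} decays in $N$ faster than $N^{-C_1}$ while the other decays in $t$ integrably. Your sketch calibrates $\eps$ "against $N$" and proposes a dyadic telescoping with a weighted H\"older or box decomposition; none of this is carried out, and the tension between the two requirements on $(\bar p,\bar\gamma)$ — the place where the argument could actually fail — is not addressed. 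A minor point: your Jensen argument for $Z_{N,\beta}\ge1$ requires $\int_{\Omega^2}W\,\dx{\bar\rho}\,\dx{\bar\rho}\ge0$ (one computes $\E[\mathring{I}_W[\eta^N_X]]=-\tfrac12\int_{\Omega^2}W\,\dx{\bar\rho}\,\dx{\bar\rho}$), which does not follow from \cref{ass:potential0} since $\bar\rho$ is not a zero-mass measure; this deserves either an extra hypothesis or a separate argument.
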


\section{Proof of~\cref{thm:main}} 
\label{sec:proofs}
Without loss of generality, we set $\beta=1$. We use the shorthand notation $W_\eps:=P_\eps W$ for the regularized interaction potential, and define
\begin{equation}
  \eta^N_x= N^{-\frac12}\left(\sum_{i=1}^N \delta_{x_i}-\bar\rho\right)\, ,
\end{equation}
as in \eqref{eq:fluctuation}. We will use the notation $\mathring{I}_W[\eta^N_x]$ to denote the interaction energy defined in~\eqref{eq:fluctuation} with respect to the potential $W$, and $\mathring{I}_{W_\eps}[\eta^N_x]$ for the regularized potential $W_\eps$.

We start the proof by proving the folowing lemma that allows us to control the behaviour of $\mathring{I}_W[\eta^N_x]$. 
\begin{lemma}
Assume $W $ satisfies~\cref{ass:potential0,ass:potential} with $\bar\rho \in \cP(\Omega)$ as in the statement of~\cref{thm:main}. Then, 
\begin{enumerate}
\item[(a)] there exists a constant $C_1\in (0,\infty)$ such that $ \mathring{I}_W[\eta^N_x]\geq -C_1\ln N$,
\item[(b)] and there exists a constant $C_2\in (0,\infty)$ such that $\mathring{I}_{W_\eps}[\eta^N_x]\geq -C_2|\ln \eps|$ for $\eps \in (0,1/2)$.
\end{enumerate}
\label{lem:coulombapriori}
\end{lemma}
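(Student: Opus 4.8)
The plan is to prove both bounds using a single mechanism: the regularised (``mollified'') interaction is positive up to a controlled error, because the self-interaction terms are only logarithmically divergent. First I would expand $\mathring I_{W_\eps}[\eta^N_x]$ by reintroducing the diagonal: writing $\mu^N_x = N^{-1/2}\sum_i \delta_{x_i}$ for the empirical part, we have
\begin{equation*}
\mathring I_{W_\eps}[\eta^N_x] = \tfrac12\int_{\Omega^2}W_\eps\,\dx\eta^N_x\dx\eta^N_x \;-\; \tfrac1{2N}\sum_{i=1}^N W_\eps(x_i,x_i).
\end{equation*}
By \cref{ass:potential0} (applied to the regularised potential, since $P_{\eps'}W_\eps = W_{\eps+\eps'}$ and $\eta^N_x$ has zero mass), the first term on the right is nonnegative. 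By the logarithmic upper bound \eqref{eq:logarithmicbound}, $|W_\eps(x_i,x_i)| \le C_0(|\ln\eps|+1)$, so the diagonal correction is bounded below by $-\tfrac{C_0}{2}(|\ln\eps|+1)$. This already gives part~(b): for $\eps\in(0,1/2)$, $\mathring I_{W_\eps}[\eta^N_x] \ge -C_2|\ln\eps|$ with $C_2$ depending only on $C_0$.

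For part~(a) I would then transfer the bound from $W_\eps$ to $W$ using the quantified super-harmonicity \eqref{superharmonicity}. We have
\begin{equation*}
\mathring I_W[\eta^N_x] - \mathring I_{W_\eps}[\eta^N_x] = \tfrac12\int_{\Omega^2\setminus\Delta}\big(W(x,y)-W_\eps(x,y)\big)\,\dx\eta^N_x(x)\dx\eta^N_x(y),
\end{equation*}
and $\eta^N_x = \mu^N_x - N^{-1/2}\bar\rho$; expanding the product into the three pieces (empirical$\times$empirical, cross, $\bar\rho\times\bar\rho$) and using $W - W_\eps \ge -K\eps^\alpha$ pointwise together with $|W-W_\eps| \le$ (something integrable against $\bar\rho$, controlled by \eqref{eq:Besovestimate}), each piece is bounded below by a quantity of order $-K\eps^\alpha N$ (the empirical--empirical term has $N(N-1)/2 \sim N^2$ off-diagonal pairs, each carrying a $1/N$ prefactor, so it is $\gtrsim -K\eps^\alpha N$; the cross and $\bar\rho\times\bar\rho$ terms are lower order in $N$ by the $L^1$ bound on $|W-W_\eps|$). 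Hence
\begin{equation*}
\mathring I_W[\eta^N_x] \;\ge\; \mathring I_{W_\eps}[\eta^N_x] - C K \eps^\alpha N \;\ge\; -C_2|\ln\eps| - CK\eps^\alpha N,
\end{equation*}
and optimising by choosing $\eps = N^{-1/\alpha}$ (valid once $N$ is large; small $N$ handled separately) yields $\mathring I_W[\eta^N_x] \ge -C_1\ln N$.

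The main obstacle I anticipate is making the off-diagonal bookkeeping in part~(a) fully rigorous: $\mathring I_W$ is defined only off the diagonal, and one must be careful that passing from $W$ to $W_\eps$ does not secretly reintroduce the diagonal contribution, and that the lower bound $W-W_\eps \ge -K\eps^\alpha$ combined with the integrability of $|W-W_\eps|$ really does control the signed measure $\dx\eta^N_x\dx\eta^N_x$ on $\Omega^2\setminus\Delta$ — the cross terms involve pairing a regularised-difference against $\bar\rho$, where one needs \eqref{eq:Besovestimate} with $p=1$ (or Jensen from $p>1$) uniformly in $x$. In dimension one the same scheme runs with $P_\eps^{1/2}$ in place of $P_\eps$, with no change in structure. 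A minor point is the lower bound $Z_{N,\beta}\ge 1$ promised in \cref{thm:main}, which is immediate from Jensen's inequality once one knows $\int \mathring I_W[\eta^N_x]\,\dx\bar\rho^{\otimes N} = 0$ (the diagonal is excluded, so the expectation of the interaction vanishes); but that is not part of this lemma.
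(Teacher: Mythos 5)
Your proposal follows essentially the same route as the paper: for (b), reintroduce the diagonal, invoke H-stability (\cref{ass:potential0}) for the full bilinear form and the logarithmic bound \eqref{eq:logarithmicbound} for the self-interaction; for (a), compare $\mathring I_W$ with $\mathring I_{W_\eps}$, use super-harmonicity \eqref{superharmonicity} on the off-diagonal empirical pairs and the Besov estimate \eqref{eq:Besovestimate} on the cross and $\bar\rho\otimes\bar\rho$ pieces, then optimise in $\eps$. One correction to your bookkeeping: under the zero-mass normalisation $\eta^N_x=\sqrt N(\mu^N_x-\bar\rho)$ that you yourself need for H-stability in (b), the cross term is $-\sum_{i=1}^N\int(W-W_\eps)(x_i,y)\,\dx\bar\rho(y)$ and the $\bar\rho\otimes\bar\rho$ term carries a factor $N/2$, so both are $O(N\eps^{\kappa/p})$ rather than lower order in $N$; choosing $\eps=N^{-1/\alpha}$ alone would then leave a term $N^{1-\kappa/(p\alpha)}$ that can be polynomially large. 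The fix is exactly what the paper does: take $\eps\sim N^{-1/\min\{\alpha,\kappa\}}$ (with the appropriate Besov exponent), after which the conclusion $\mathring I_W[\eta^N_x]\ge -C_1\ln N$ follows as you state.
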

\begin{proof}
  We first prove (b).  Using \eqref{eq:logarithmicbound}, we obtain
$$
\mathring{I}_{W_\eps}[\eta^N_x]= I_{W_\eps}[\eta^N_x]+\frac{1}{N}\sum_{i=1}^N W_\eps(x_i,x_i).
$$
We now use the fact that $P_\eps=p_\eps \otimes p_\eps$ where $p_\eps$ is the heat semigroup on $\Omega$ along with~\cref{ass:potential0} to argue that the first term on the right hand side of the above expression is non-negative.  The desired inequality then follows from ~\cref{ass:potential} (1), that is to say
$$
|W_\eps|(x,x)\le (P_\eps |W|)(x,x)\le C_2|\ln\eps|,
$$
for $C_2$ large enough and $\eps<1/2$.
  
To prove (a), we start by manipulating the expression to obtain
\begin{align}
    \mathring{I}_W[\eta^N_x]&=\mathring{I}_W[\eta^N_x]- \mathring{I}_{W_\eps} [\eta^N_x]+N\mathring{I}_{W_\eps}[\eta^N_x]\\
    &\ge \underbrace{\mathring{I}_W[\eta^N_x]- \mathring{I}_{W_\eps}[\eta^N]}_{=:I}-C_2|\ln\eps|,
\end{align}
where we have used the bound in (b). Expanding the difference, we arrive at the following expression
\begin{align}
    I=&\frac{1}{2N} \sum_{i\ne j} W(x_i,x_j)-W_\eps(x_i,x_j)-\sum_{i=1}^N\int_\Omega \left(W(x_i,y)-W_\eps(x_i,y) \right)\dx \bar\rho(y)\\
    &+\frac{N}{2}\int_{\Omega^2} \left(W(x,y)-W_\eps(x,y)\right)\dx\bar\rho(x)\dx\bar\rho(y) \, .
\end{align}

We control the first term on the right hand side using the super-harmonicity bound~\eqref{superharmonicity} and the other two terms using the Besov estimate \eqref{eq:Besovestimate} to obtain
$$
I\ge -C(N\eps^\alpha+N\eps^\kappa) \, ,
$$
for some fixed constant $C<\infty$. Using the above bound, we obtain
$$
\mathring{I}_W[\eta^N_x]\ge -C(N\eps^\alpha+N\eps^d)-C_2|\ln\eps|\ge -C_1\ln N,
$$
where the last bound follows from picking
$$
\eps=\min\left\{\frac{1}{N^{\frac{1}{\min\{\alpha,\kappa\}}}},\frac12 \right\}
$$
and choosing $C_1$ to be sufficiently large.
\end{proof}

We now move on to the statement and proof of the correlation inequality which forms the technical core of the argument.
\begin{lemma}[Correlation inequality]\label{lem:corineq}
Fix $p \in \N$ and $\gamma \in [1/2,1)$, such that $1\le p\le N$. Let $G:\Omega^2 \to \R$ be such that $G(x,y)=G(y,x)$ for all $x,y \in \Omega$,
\begin{equation}
  \sup_{x\in \Omega}\lVert G(x,\cdot) \rVert_{L^p(\Omega;\bar\rho)} <\infty\,, \quad \textrm{ and }\quad \int_{\Omega}G(x,y) \, \dx{\bar\rho}(y)=0\,,\, \forall x \in \Omega\, .
\end{equation}

Then, there exists a constant $C_p<\infty$, such that
\begin{align}
&\mathbb{{E}}\left[\left|\frac{1}{N}\sum_{i\neq j}^{N}G(X_{i},X_{j})\right|^{p}\right]\\&\qquad \leq C_p\left(\frac{1}{N^{p-1-\lfloor \gamma p \rfloor}} \sup_{x \in \Omega} \lVert  G(x,\cdot)\rVert_{L^p(\Omega;\bar\rho)}^{p}\, +  \sup_{x \in \Omega} \lVert  G(x,\cdot)\rVert_{L^{2(p-\lceil \gamma p \rceil)}(\Omega;\bar\rho)}^{p} \right)\, ,
\label{eq:corineq}
\end{align}
where $(X_i)_{i=1,\dots,N}$ are independent $\bar\rho$-distributed random variables.
\end{lemma}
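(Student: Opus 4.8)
The plan is to expand the $p$-th power of the centered sum and track which terms survive in expectation, exploiting the centering condition $\int_\Omega G(x,y)\,\dx\bar\rho(y)=0$. Writing
\[
  \left(\frac{1}{N}\sum_{i\ne j}G(X_i,X_j)\right)^{p}
  = \frac{1}{N^{p}}\sum_{(i_1,j_1),\dots,(i_p,j_p)} \prod_{k=1}^p G(X_{i_k},X_{j_k}) \, ,
\]
I would organise the index tuples $((i_1,j_1),\dots,(i_p,j_p))$ according to the multigraph $\mathcal{G}$ they induce on $\{1,\dots,N\}$ (vertices are the indices appearing, edges are the pairs $\{i_k,j_k\}$ with multiplicity). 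Because the $X_i$ are i.i.d., the expectation of $\prod_k G(X_{i_k},X_{j_k})$ factorises over connected components of $\mathcal{G}$, and by the centering hypothesis any component that contains a vertex of degree $1$ contributes zero: integrating out that leaf variable kills the whole component. Hence only multigraphs with minimum degree $\ge 2$ on each component contribute. The number of distinct vertices in such a graph with $p$ edges is at most $p - (\text{number of "excess" edges})$; more precisely a connected component with $e$ edges and $v$ vertices has $v \le e$, and stringing components together one gets the key combinatorial bound: if the graph has $v$ vertices in total then it has at least $\lceil v \rceil$ hmm — rather, since every vertex has degree $\ge 2$, we have $2$-regularity-type counting $\sum \deg \ge 2v$, i.e. $2p = \sum \deg \ge 2v$, so $v \le p$, but we need the sharper statement that the number of \emph{free} summation indices is controlled.

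The heart of the matter is then a bookkeeping step: for a fixed admissible multigraph shape with $v$ vertices, the number of ways to assign distinct labels in $\{1,\dots,N\}$ is $\le N^{v}$, the prefactor is $N^{-p}$, and the corresponding expectation is bounded using Hölder's inequality on $\Omega^{v}$ by a product of $L^{q}(\Omega;\bar\rho)$-norms of $G(x,\cdot)$ whose exponents are read off from the degrees — a vertex of degree $m$ forces an $L^{m}$-type norm in that variable. The threshold $\gamma p$ is what interpolates between the two regimes on the right-hand side of \eqref{eq:corineq}: graphs with many vertices (up to $\lfloor \gamma p\rfloor$ or so) are "cheap" combinatorially and get absorbed into the $N^{-(p-1-\lfloor\gamma p\rfloor)}$ term with the crude $L^p$-norm, while graphs with few vertices (the most degenerate ones, where edges pile up on a small vertex set) need the higher-integrability norm $L^{2(p-\lceil\gamma p\rceil)}$ but come with no $N$-loss because $v$ is small. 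I would split the sum over shapes at $v = p - \lceil \gamma p\rceil$ (equivalently at the number of "collapsed" vertices), handle the large-$v$ side by bounding every $G$-factor crudely in $\sup_x\|G(x,\cdot)\|_{L^p}$ after using that each variable is integrated against a \emph{probability} measure, and handle the small-$v$ side by a more careful Hölder split where the worst case is all $p$ edges concentrated on $v = p-\lceil\gamma p\rceil$ vertices forming a graph of maximal degree, giving the exponent $2(p-\lceil\gamma p\rceil)$.

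The main obstacle I anticipate is the combinatorics: proving cleanly that (i) only minimum-degree-$\ge 2$ multigraphs contribute, (ii) the number of such shapes with $v$ vertices is bounded by a constant $C_p$ depending only on $p$ (not on $N$), and (iii) the Hölder exponents one extracts from the degree sequence of such a graph are always $\le 2(p-\lceil\gamma p\rceil)$ in the small-vertex regime and $\le p$ always. Point (iii) is where the precise value $2(p - \lceil\gamma p\rceil)$ must be matched: with $v$ vertices and $p$ edges and all degrees $\ge 2$, the maximal degree is at most $2p - 2(v-1) = 2(p-v+1)$, and setting $v = p - \lceil\gamma p\rceil + 1$ gives maximal degree $2\lceil\gamma p\rceil$ — so I would need to check that Hölder with these exponents, applied componentwise, never requires more than $L^{2(p-\lceil\gamma p\rceil)}$; this likely follows because a high-degree vertex is adjacent to correspondingly few distinct others, so the conjugate exponents balance out. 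Once the graph classification and the two-regime split are in place, the rest is a routine application of Hölder's inequality and counting, and the constant $C_p$ is simply the total number of admissible shapes times the constants from Hölder.
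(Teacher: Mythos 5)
Your overall strategy is the paper's strategy in all essentials: expand the $p$-th power, organise the terms by the multigraph induced on $\{1,\dots,N\}$, use the centering condition together with independence to discard every term in which some vertex has degree one, bound the number of shapes by a constant $C_p$ and the number of labelings of a shape with $v$ active vertices by $N^{v}$, record that all degrees being $\geq 2$ forces the maximal degree to be at most $2p-2(v-1)$, and close with H\"older. The paper's ``$p$-multiindices'' and multiplicity vectors are exactly your multigraphs and degree sequences, and its set $\mathcal{E}_{p,\ell}$ is your class of admissible shapes with $\ell$ active vertices.

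The genuine gap is in the two-regime split, which you have set up with the threshold in the wrong place, and your point (iii) as formulated would fail. Note first that $2(p-\lceil\gamma p\rceil)\leq p$ for $\gamma\geq 1/2$, so $L^{2(p-\lceil\gamma p\rceil)}$ is the \emph{lower}-integrability norm, and it is the graphs with \emph{many} active vertices --- not the degenerate collapsed ones --- that can afford it: if $v\geq\lceil\gamma p\rceil+1$, every degree is at most $2p-2(v-1)\leq 2(p-\lceil\gamma p\rceil)$, and these graphs come with no $N$-gain ($N^{v-p}\leq 1$), which is exactly what the second term of the estimate allows. Conversely, the graphs with $v\leq\lfloor\gamma p\rfloor+1$ have potentially very high degrees but produce the factor $N^{v-p}\leq N^{-(p-1-\lfloor\gamma p\rfloor)}$, and for them the $L^{p}$-norm suffices --- not because the degrees are $\leq p$ (they need not be), but because when you integrate the variables out one at a time the H\"older exponents you actually use sum to $p$ over all vertices, so each is at most $p$, and $\bar\rho$ being a probability measure lets you pass from $L^{q}$ to $L^{p}$ for $q\leq p$. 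Your proposed cut at $v=p-\lceil\gamma p\rceil$ does not work: at $v=p-\lceil\gamma p\rceil+1$ the maximal degree is $2(p-v+1)=2\lceil\gamma p\rceil$, which for $\gamma>1/2$ strictly exceeds both $p$ and $2(p-\lceil\gamma p\rceil)$, so H\"older in that regime cannot be closed with the norm $L^{2(p-\lceil\gamma p\rceil)}$, and no balancing of conjugate exponents rescues this (your split only happens to work at $\gamma=1/2$ with $p$ even). The fix is to place the threshold at $v\approx\gamma p$ rather than $v\approx(1-\gamma)p$; with that correction and the observation about the H\"older exponents summing to $p$, your argument becomes the paper's.
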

\begin{proof}
  Before we can proceed to the main body of the proof, we need to introduce some notation to keep track of the various terms that show up when we expand the term on the left hand side of~\eqref{eq:corineq}. Consider the set  
\begin{equation}
\label{eq:defI}
A:= \{(i,j)\in \{1,\cdots,N\}^2:i\neq j\}\, .
\end{equation}
We call a map $I_p:A \to \N\cup\{0\}$ a $p$-multiindex if 
\begin{align}
\sum_{(i,j)\in A}I_p((i,j))=p\, .
\end{align}
Given a $p$-multiindex $I_p$, we associate to it the following multiplicity vector $M_{I_p}= (m_{1,I_p},\dots,m_{N,I_p})\in (\N\cup\{0\})^N$, where
\begin{equation}
m_{i,I_p}:= \sum_{j =1}^N \left(I_p((i,j)) + I_p((j,i))\right)\, .
\end{equation}
We will also need to use the following restricted set of $p$-multiindices
\begin{equation}
\label{eq:restrictedset}
\mathcal{E}_{p}:= \{I_p \text{ is a }p \text{-multiindex}:m_{i,I_p}\neq 1,\, \forall i \in \{1,\dots,N\}\}\,.
\end{equation}
Given a $p$-multiindex $I_p$, we count how many active variables the multiindex has
\begin{equation}
\mathrm{act}(I_p):=\left|B_{I_p}\right|, \qquad B_{I_p}:=\{i \in {1, \dots,N}:m_{i,I_p}\neq 0\}\,.
\end{equation}
We further refine $\mathcal{E}_p$ by considering the amount of active variables
\begin{equation}
\mathcal{E}_{p,\ell}:= \{I_p \in \mathcal{E}_{p}: \mathrm{act}(I_p)=\ell\}\,,
\end{equation}
and rewrite it into pairwise disjoint union
\begin{equation}
\mathcal{E}_p= \bigcup_{\ell=1}^p \mathcal{E}_{p,\ell}\, ,
\end{equation}
where we have used that if $I_p \in \mathcal{E}_p$ then we can must have  at most $p$ active variables that $\mathrm{act}(I_p) \leq p$. For $I_p\in\mathcal{E}_{p,\ell}$, for any $i_0\in \{1,\dots,N\}$ we can obtain the bound
\begin{equation}\label{boundonmi}
    m_{i_0,I_{p}}\le 2p-2(\ell-1),
\end{equation}
this follows from the bound
$$
m_{i_0,I_{p}}+2(\ell-1) \le \sum_{i=1}^Nm_{i,I_{p}}=\sum_{(i,j)\in A}I_p((i,j))+I_p((j,i))=2p,
$$
where we use that for any active particle $m_{i,I_{p}}\ge 2$ by definition of $I_p\in\mathcal{E}_p$. Finally, we state the following estimate on the cardinality of each set
\begin{equation}
|\mathcal{E}_{p,\ell}|\le  {N\choose\ell}(\ell^2 -\ell)^p\le C_p N^{\ell}
\label{eq:sizeE}\, .
\end{equation}

With these concepts in mind, we proceed to estimate the quantity in the statement of the lemma. We have
\begin{align}
 \, \mathbb{{E}}\left[\left|\frac{1}{N}\sum_{i\neq j}^{N}G(X_{i},X_{j})\right|^{p}\right]
=&\, \frac{{1}}{N^{p}}\int_{\Omega^N}\sum_{I_{p}}\frac{p!}{\prod_{(i,j)\in A}I_p(i,j)! }\prod_{(i,j)\in A}G(x_{i,},x_{j})^{I_p((i,j))}\,\mathrm{{d}}\bar\rho^{\otimes N} \\
=&\, \frac{{1}}{N^{p}}\int_{\Omega^N}\sum_{I_{p}\in\mathcal{{E}}_{p}}\frac{p!}{\prod_{(i,j)\in A}I_p(i,j)! } \prod_{(i,j)\in A}G(x_{i,},x_{j})^{I_p((i,j))}\,\mathrm{{d}}\bar\rho^{\otimes N} \\
\leq & \, \frac{C_p}{N^{p}}\int_{\Omega^N}\sum_{\ell=2}^p\sum_{I_{p}\in\mathcal{{E}}_{p,\ell}} \prod_{(i,j)\in A}|G(x_{i,},x_{j})|^{I_p((i,j))}\,\mathrm{{d}}\bar\rho^{\otimes N} \, .\label{eq:termtosplit}
\end{align}
Note that the restriction $I_p\in\mathcal{E}_p$ in the second equality stems from the fact that 
\begin{equation}
\int_{\Omega}G(x,y)\:\mathrm{{d}\bar\rho}(x)=\int_{\Omega}G(x,y)\:\mathrm{{d}\bar\rho}(y)=0 \, ,
\end{equation}
from which it follows that the corresponding terms in the sum are zero. Given $I_{p} \in \mathcal{E}_{p,\ell}$, enumerating the elements of $B_{I_p}$ as $\{i_1,\dots,i_\ell\}$, we define the set of active indices that contain $i_k$ by
\begin{equation}
A_{k,I_p}:= \left\{ (i,j) \in A: (i =i_k \;\mbox{or}\; j=i_k)\;\mbox{and}\; I_p(i,j)>0 \right\} \, \qquad\mbox{for $k=1,\dots,\ell$.}
\end{equation} 
For any $I_p \in \mathcal{E}_{p,\ell}$ we can integrate out the $N-\ell$ variables that do not lie in $B_{I_p}$. We then decompose the remaining variables into a disjoint union of the sets $C_{k,I_p}\coloneqq A_{k,I_p}\setminus \bigcup_{m=1}^{k-1}A_{m,I_p}$ for $k=1,\dots,\ell-1$ to derive the identity
\begin{align}
&\, \int_{\Omega^N}\prod_{(i,j)\in A}|G(x_{i,},x_{j})|^{I_p((i,j))}\,\mathrm{{d}}\bar\rho^{\otimes N}\\
= & \, \int_{\Omega^{\ell}}\prod_{k=1 }^{\ell-1}\left(\prod_{(i,j)\in C_{k,I_p}}|G(x_{i,},x_{j})|^{I_p((i,j))}\right)\mathrm{{d}}\bar\rho^{\otimes \ell}(x_{i_1},...,x_{i_\ell})\label{eq:breakingupA}
\end{align}

To estimate \eqref{eq:breakingupA}, we proceed inductively for each element in $k=1,...,\ell-1$. We first introduce some notation and bounds.
\begin{align}
\gamma_k:=& \,\sum_{(i,j)\in C_{k,I_p}}I_p((i,j))\\\leq&\,\sum_{(i,j)\in A_{k,I_p}}I_p((i,j))\\
\eqqcolon&\, m_{k,p}\le \begin{cases}
    p&\ell<\frac{p}{2}+1\\
    2p-2(\ell-1)& \ell\ge \frac{p}{2}+1
\end{cases}
\, ,
\end{align}
where for the last inequality we have applied the bounds from \eqref{boundonmi}.
We start by bounding the integrand that only depends on $x_{i_1}$, and apply H\"older's inequality with exponents $p_j=\gamma_1/I_p(i_1,j)$ to obtain
\begin{align}
&\int_{\Omega}\prod_{(i,j)\in C_{1,I_p}}|G(x_{i,},x_{j})|^{I_p((i,j))}\mathrm{{d}}\bar\rho(x_{i_1})\\
\le & \, \prod_{(i,j)\in C_{1,I_p}} \left(\int_\Omega |G(x_{i_1},x_{j})|^{\gamma_1}\;\dx \bar\rho(x_{i_1})\right)^{I_p(i_1,j)/\alpha_1}\\
\le & \, \prod_{(i,j)\in C_{1,I_p}}\sup_x \left(\int_\Omega |G(x_{i_1},x)|^{\gamma_1}\;\dx \bar\rho(x_{i_1})\right)^{I_p(i_1,j)/\alpha_1}\\
= & \,\sup_x\int_\Omega |G(x_{i_1},x)|^{\gamma_1}\;\dx \bar\rho(x_{i_1})\\
\leq &   \,
 \begin{cases}
  \displaystyle\sup_{x \in \Omega} \lVert  G(x,\cdot)\rVert_{L^p(\Omega;\bar\rho)}^{\gamma_1} & \ell \leq \frac p2 +1 \\
  \displaystyle \sup_{x \in \Omega} \lVert  G(x,\cdot)\rVert_{L^{2p-2(\ell-1)}(\Omega;\bar\rho)}^{\gamma_1} & \ell > \frac p2 +1,
  \end{cases}
\end{align}
which is a bound that is independent of the other variables $x_{i_2},...,x_{i_\ell}$. We proceed in a similar manner for the other terms in the product, i.e. we can apply H\"older's inequality with exponents $\gamma_k/I_p((i,j))$ to obtain
\begin{align}
 &  \int_{\Omega}\left(\prod_{(i,j)\in C_{k,I_p}}|G(x_{i,},x_{j})|^{I_p((i,j))}\right) \, \dx{\bar\rho}(x_{i_k}) \\
\leq & \,  \, \sup_{x \in \Omega} \lVert  G(x,\cdot)\rVert_{L^{\gamma_k}(\Omega;\bar\rho)}^{\alpha_k}\\
 \leq &  \,
 \begin{cases}
  \displaystyle\sup_{x \in \Omega} \lVert  G_{\eps}(x,\cdot)\rVert_{L^p(\Omega;\bar\rho)}^{\gamma_k} & \ell \leq \frac p2 +1 \\
  \displaystyle \sup_{x \in \Omega} \lVert  G_{\eps}(x,\cdot)\rVert_{L^{2p-2(\ell-1)}(\Omega;\bar\rho)}^{\gamma_k} & \ell > \frac p2 +1
 \end{cases}
 \, .
\end{align}
Putting these bounds together we obtain that for any multiindex $I_p\in \mathcal{E}_{p,l}$, we have
\begin{align}
    &\int_{\Omega^N}\prod_{(i,j)\in A}|G(x_{i,},x_{j})|^{I_p((i,j))}\,\mathrm{{d}}\bar\rho^{\otimes N}\\\le & \,  \begin{cases}\displaystyle
  \prod_{k=1}^{\ell-1}\sup_{x \in \Omega} \lVert  G(x,\cdot)\rVert_{L^p(\Omega;\bar\rho)}^{\gamma_k} & \ell \leq \frac p2 +1 \\
  \displaystyle\prod_{k=1}^{\ell-1}\sup_{x \in \Omega} \lVert  G(x,\cdot)\rVert_{L^{2p-2(\ell-1)}(\Omega;\bar\rho)}^{\gamma_k} & \ell > \frac p2 +1
  \end{cases}\\
  =& \,\begin{cases}\displaystyle
  \sup_{x \in \Omega} \lVert  G(x,\cdot)\rVert_{L^p(\Omega;\bar\rho)}^{p} & \ell \leq \frac p2 +1 \\
  \displaystyle\sup_{x \in \Omega} \lVert  G(x,\cdot)\rVert_{L^{2p-2(\ell-1)}(\Omega;\bar\rho)}^{p} & \ell > \frac p2 +1
    \end{cases}\, ,
\end{align}
where we have used the fact that $\sum_{k=1}^{\ell-1}\gamma_k=p$.
Using the above bound, we see that the right hand side of~\eqref{eq:termtosplit} can be estimated as follows
\begin{align}
& \,\frac{{C_p}}{N^{p}}\int_{\Omega^N}\sum_{\ell=2}^p\sum_{I_{p}\in\mathcal{{E}}_{p,\ell}}\prod_{(i,j)\in A}|G(x_{i,},x_{j})|^{I_p((i,j))}\,\mathrm{{d}}\bar\rho^{\otimes N} \\
\leq & \, \frac{C_p}{N^{p}} \sum_{\ell=1}^{\lfloor\frac p2\rfloor +1}|\mathcal{E}_{p,\ell}| \sup_{x \in \Omega} \lVert  G(x,\cdot)\rVert_{L^p(\Omega;\bar\rho)}^{p}\\
& \, + \frac{C_p}{N^{p}} \sum_{\ell=\lceil\frac p2\rceil +1}^{p}|\mathcal{E}_{p,\ell}| \sup_{x \in \Omega} \lVert  G(x,\cdot)\rVert_{L^{2p-2(\ell-1)}(\Omega;\bar\rho)}^{p}\\
\leq & \, \frac{{C_p}}{N^{p}} \sum_{\ell=2}^{\lfloor\frac p2\rfloor +1} N^{\ell}\sup_{x \in \Omega} \lVert  G_{\eps}(x,\cdot)\rVert_{L^p(\Omega;\bar\rho)}^{p}\\
& \, + \frac{{C_p}}{N^{p}} \sum_{\ell=\lceil\frac p2\rceil +1}^{p} N^{\ell} \sup_{x \in \Omega} \lVert  G(x,\cdot)\rVert_{L^{2p-2(\ell-1)}(\Omega;\bar\rho)}^{p}\label{eq:lastineq} \, ,
\end{align}
where we have used~\eqref{eq:sizeE} along with the standard upper bound on the binomial coefficient and we continue to denote the new constant by $C_p$. We now pick some $\gamma \in [\frac12,1)$ and bound the right hand side of~\eqref{eq:lastineq} as follows
\begin{align}
&\,\frac{{C_p}}{N^{p}} \sum_{\ell=2}^{\lfloor\gamma p\rfloor +1}N^{\ell}\sup_{x \in \Omega} \lVert  G(x,\cdot)\rVert_{L^p(\Omega;\bar\rho)}^{p}\, + \frac{{C_p}}{N^{p}} \sum_{\ell=\lceil\gamma p\rceil +1}^{p} N^{\ell} \sup_{x \in \Omega} \lVert  G(x,\cdot)\rVert_{L^{2p-2(\ell-1)}(\Omega;\bar\rho)}^{p} \\
\leq & \,C_p\left( \frac{1}{N^{p-1-\lfloor \gamma p \rfloor}} \sup_{x \in \Omega} \lVert  G(x,\cdot)\rVert_{L^p(\Omega;\bar\rho)}^{p}\, +  \sup_{x \in \Omega} \lVert  G(x,\cdot)\rVert_{L^{2(p-\lceil \gamma p \rceil)}(\Omega;\bar\rho)}^{p}\right) \, ,
\end{align}
where we continue to denote the constant by $C_p$. This completes the proof of the lemma.
\end{proof}

We finally have the ingredients needed to complete the proof of~\cref{thm:main}.
\begin{proof}[Proof of~\cref{thm:main}]
  As mentioned earlier, the proof is similar in spirit to Nelson's construction of $\varphi^4_2$ (see \cite{hairer2021introduction}). We rewrite $Z_{N}$ as follows
  \begin{equation}
  Z_{N} = 
  \int_0^\infty \mathbb{P}(M(X) \leq -\ln t)\, \mathrm{d}t \, , 
  \end{equation}
  where the random variable $M(X)$ is defined as follows
  \begin{equation}
     M(X) \coloneqq \mathring{I}_{W}[\eta^N_X] \, ,
  \end{equation}
  and, as before, $X=(X_i)_{i=1,\dots,N}$ is a vector of $N$ independent $\bar\rho$-distributed random variables. We can now use~\cref{lem:coulombapriori} (a) to obtain
\begin{equation}
Z_{N} = 1+ \int_1^{N^{C_1}} \mathbb{P}[M(X) \leq -\ln t]\, \mathrm{d}t\, .
\end{equation}
We will now estimate the second term on the right hand side of the above expression. Before we do this, we introduce the random variable
\begin{equation}
  M_{\eps}(X)\coloneqq \mathring{I}_{W_\eps}[\eta^N_X]
\end{equation}
for some $\eps>0$ to be appropriately chosen. We then have
\begin{align}
\int_1^{N^{C_1}} \mathbb{P}[M(X) \leq -\ln t]\, \mathrm{d}t 
=& \,\int_{1}^{N^{C_{1}}}\mathbb{{P}}[M(X)-M_{\eps}(X)\leq-M_{\eps}(X)-\ln t]\;\mathrm{{d}}t\\
\leq & \, \int_{1}^{N^{C_{1}}}\mathbb{{P}}[M(X)-M_{\eps}(X)\leq-1]\;\mathrm{{d}}t\\
\leq& \,\int_{1}^{N^{C_{1}}}\mathbb{{E}}[|M_{\eps}(X)-M(X)|^{p}]\:\mathrm{{d}}t\,, \label{eq:timeintegral}
\end{align}
where we have chosen $\eps=(\frac{e}{t})^{\frac{1}{C_2}}$ and applied ~\cref{lem:coulombapriori} (b) for the first inequality, and Chebyshev's inequality for some $p \in \N$ which have yet to choose for the second inequality. Proceeding, we compute
\begin{align}
\mathbb{{E}}(|M(X)-M_\eps(X)|^{p})= & \, \mathbb{{E}}\left[\left|\int_{\Omega^{2}\setminus\Delta}(W(x,y)-W_{\eps}(x,y))\:\mathrm{{d}}(\eta_{X}^{N})^{\otimes2}(x,y)\right|^{p}\right]\\
=& \, \mathbb{{E}}\left[\left|\frac{1}{N}\sum_{i\neq j}^{N}G_{\eps}(X_{i},X_{j})\right|^{p}\right]\:, \label{eq:intermezzo}
\end{align}
where
\begin{equation}
G_{\eps}(x,y)=\int_{\Omega^2}(W(z_{1},z_{2})-W_{\eps}(z_{1},z_{2}))\mathrm{\mathrm{{d}}(\delta_{x}-\bar\rho)\otimes(\delta_{y}-\bar{\rho})(z_{1},z_{2})}\: .
\end{equation}
It is straightforward to check that $G$ satisfies the conditions in the statement of~\cref{lem:corineq}. Applying~\cref{lem:corineq} we obtain for any $1\leq p<N,\; p \in \N$ and $\gamma \in [1/2,1)$, the following estimate
\begin{eqnarray}
  &&\mathbb{{E}}\left[\left|\frac{1}{N}\sum_{i\neq j}^{N}G_{\eps}(X_{i},X_{j})\right|^{p}\right]\nonumber\\
  &&\qquad\leq C_p\left( \frac{1}{N^{p-1-\lfloor \gamma p \rfloor}} \sup_{x \in \Omega} \lVert  G_{\eps}(x,\cdot)\rVert_{L^p(\Omega;\bar{\rho})}^{p}\, +  \sup_{x \in \Omega} \lVert  G_\eps(x,\cdot)\rVert_{L^{2(p-\lceil \gamma p \rceil)}(\Omega;\bar{\rho})}^{p}\right)\, .\label{eq:finalcor}
\end{eqnarray}
Using \cref{ass:potential}, the Besov estimate \eqref{eq:Besovestimate}, we have
\begin{align}
&\,\sup_{x \in \Omega} \lVert  G_{\eps}(x,\cdot)\rVert_{L^p(\Omega;\mu)}^{p} \leq  \,4 \sup_{x \in \Omega}\int_{\Omega}\left|W(x,y)-W_\eps(x,y)\right|^p \dx{\bar{\rho}}(y)
\leq  \, 4C_{p,\kappa} \eps^{\kappa} \, .
\end{align}
 Applying this estimate to~\eqref{eq:finalcor}, leaves us with
 \begin{equation}
   \mathbb{{E}}\left[\left|\frac{1}{N}\sum_{i\neq j}^{N}G_{\eps}(X_{i},X_{j})\right|^{p}\right]\leq C\left(\frac{\eps^\kappa}{N^{p-1-\lfloor \gamma p \rfloor}}+ \eps^{\kappa\left(\frac{p}{2(p-\lceil \gamma p \rceil)}\right)}\right)\, ,\label{eq:last}
 \end{equation}
 forr some constant $C$ which only depends on $p,\kappa$. We now choose
 \begin{equation}
\bar{\gamma}=\min \left\{\frac{C_2+1-\kappa}{C_2+1},\frac12\right\}\,, \quad \bar{p}= \frac{C_1}{1-\bar{\gamma}}
 \end{equation}
 which ensures that
 \begin{equation}
   \bar{p}-1-\lfloor \bar{\gamma} \bar{p} \rfloor> C_1 \, , \quad \kappa\left(\frac{\bar{p}}{2(\bar{p}-\lceil \bar{\gamma} \bar{p} \rceil)}\right)\geq C_2+1\, .
 \end{equation}
 Furthermore, these choices of $\bar{p},\bar{\gamma}$ are clearly independent of $N$. For $N>\bar{p}$, we can plug in the right hand side of~\eqref{eq:last} into right hand side of~\eqref{eq:intermezzo} and \eqref{eq:timeintegral} to get the integral is bounded. The proof that $Z_{N}$ is bounded for $N<\bar{p}$ is trivial since this is a finite set. Combining the two cases completes the proof of the theorem.
\end{proof}

\section{Application to the Gibbs measure} 
\label{sub:gibbs_measure_and_convergence}
Our main result \cref{thm:main} has an immediate implication for the convergence of the dynamics \eqref{particles} at equilibrium. Under enough regularity and growth conditions on the potentials $V$ and $W$, the system of SDEs~\eqref{particles} admits a unique invariant measure, which is the so-called Gibbs measure $M_N$ defined as follows
\begin{equation}
  \label{eq:Gibbs}
  M_N(\dx{x}) \coloneqq \tilde{Z}_N^{-1} \exp \left(-H_N(x)\right) \mathrm{d}x\,, \quad \tilde{Z}_N\coloneqq \int_{\Omega^N} \exp \left(-H_N(x)\right)\dx{x} \, ,
\end{equation}
where the Hamiltonian is given by
$$
H_N(x)=\frac{1}{2N}\sum_{i\ne j}W(x_i,x_j) + \sum_{i=1}^N V(x_i) \, .
$$
The measure $M_N$ is also the unique minimiser of 
$$
E_N[\rho^N]=\mathcal{H}[\rho^N|\dx{x}] + \int_{\Omega^N}  H_N(x)\, \dx{\rho^N}(x) \, .
$$
We are interested in studying the behaviour of $M_N$ as $N \to \infty$. It is known that in the appropriate sense 
$$
N^{-1}E_N\to^\Gamma E[\rho]:=\mathcal{H}[\rho|\dx{x}] + \int_{\Omega} V \, \dx{\rho} + \frac{1}{2}\int_{\Omega^2} W(x,y)\;\dx\rho(x)\dx\rho(y) \, .
$$ 
We refer the reader to \cite{messer1982statistical} for a classical reference, to \cite{CARRILLO2020108734} for the convergence of the gradient flow structures, and the authors work \cite{Delgadino2021,Delgadino2023} for applications of this convergence to the dynamics. 

In the setting in which $E$ has a unique minimiser, say $\bar \mu \in \cP_2(\Omega)$, this readily implies that 
\begin{equation}
  \overline{\mathcal{H}}[\bar\mu^{\otimes N}|M_N] =o(1) \, . 
\end{equation}
It is also known that one can further quantify the result above to the sharp rate 
\begin{equation}
  \overline{\mathcal{H}}[\bar\mu^{\otimes N}|M_N] =O(N^{-1}) \, ,
  \label{eq:sharprate}
\end{equation}
under smoothness assumptions on the potentials $V$ and $W$, and non-degeneracy of the minimiser $\bar \mu$ (see for instance Ben Arous and Brunaud \cite{BenArous1990} or Ben Arous and Zeitouni \cite{BenArous1999}). In this article, we show that the above result can be extended to repulsive interaction potentials with a logarithmic singularity, such as the repulsive log gas case in arbitrary dimension.

For the sake of brevity, we provide a formal sketch of the argument omitting details. A detailed proof in the repulsive-attractive setting will be presented in the follow-up work~\cite{Delgadino2025}.  We will assume that the potentials $V$ and $W$ are such that the Gibbs measure $M_N$ is well-defined, and $E$ admits a unique minimizer $\bar{\mu}$. We note that since $\bar{\mu}$ minimises $E$ it satisfies the corresponding Euler--Lagrange equation  which can be expressed as
\begin{equation}
  \frac{\dx{\bar{\mu}}}{\dx{x}} =Z_{\bar{\mu}}^{-1} e^{-W\star \bar \mu -V}\, , \quad Z_{\bar{\mu}}=\int_{\Omega} e^{-W\star \bar \mu -V}\, \dx{x}\, .
\end{equation}
Using the above expression it is possible to rewrite the Gibbs measure $M_N$ in the following manner
\begin{equation}
  M_N (\dx{x})=\frac{e^{-\mathring{I}_W[\eta^N_x]}}{Z_{N}} \dx \bar\mu^{\otimes N}(x)
\end{equation}
where now
\begin{align}
 \qquad Z_{N}= &\, e^{N E[\bar{\mu}]}\tilde{Z}_N = \, \int_{\Omega^N}e^{-\mathring{I}_W[\eta^N_x]} \;\dx \bar\mu^{\otimes N}(x) \, .
\end{align}
Using this new form of the Gibbs measure, we can compute the relative entropy of the Gibbs measure $M_N$ with respect to the product measure $\bar{\mu}^{\otimes N}$ as follows
\begin{equation}
   \mathcal{H}[\bar{\mu}^{\otimes N}|M_N] = \log Z_{N}\, ,
\end{equation} 
where we have used the fact that
\begin{equation}\label{eq:cancellation}
  \int_{\Omega^N} \mathring{I}_W[\eta^N_x]\, \dx{\bar \mu}^{\otimes N}=0 \, .
\end{equation}
Similarly, the relative entropy of $M_N$ with respect to the product measure $\bar{\mu}^{\otimes N}$ can be expressed as
\begin{eqnarray}
  \mathcal{H}[M_N|\bar{\mu}^{\otimes N}] &=&  -\frac{1}{Z_{N}}\int_{\Omega^N}\mathring{I}_W[\eta^N_x] \, e^{-\mathring{I}_W[\eta^N_x]}\dx{\bar{\mu}^{\otimes N}}(x)  -\log Z_{N}\nonumber\\
   &\le& \frac{\int_{\Omega^N}|\mathring{I}_W[\eta^N_x]|^2\;\dx{\bar{\mu}^{\otimes N}}(x)}{Z_{N}}+ \frac{\int_{\Omega^N} e^{-2\mathring{I}_W[\eta^N_x]}\dx{\bar{\mu}^{\otimes N}}(x)}{Z_N}  -\log Z_{N}\,\nonumber \\
    &=& \frac{\int_{\Omega^2} W^2(x,y)\;\dx\bar\mu(x)\dx\bar\mu(y)}{Z_N}+\frac{\int_{\Omega^N} e^{-2\mathring{I}_W[\eta^N_x]}\dx{\bar{\mu}^{\otimes N}}(x)}{Z_N}  -\log Z_{N}\, ,
\label{eq:entropybound}
\end{eqnarray}
where we have used Cauchy--Schwarz inequality in the first line and \eqref{eq:cancellation} in the second line. Controlling the first term in \eqref{eq:entropybound} is straightforward, the second term is the partition function $Z_{N,2}$, and the last term is the logarithm of the partition function $Z_N$. We are ready to state a corrollary of the bound of the partition function in \cref{thm:main}.

\begin{theorem}[Convergence of the Gibbs measure]\label{thm:Gibbsconvergence}
  Assume $W$ is such that ~\cref{ass:potential0,ass:potential} are satisfied and $V,\,W$ are chosen such that the mean field energy $E$ has a unique minimiser $\bar{\mu} \in \cP_2(\Omega)$ with an essentially bounded density. Then, we have that
    \begin{equation}
    \sup_{N \in \N} Z_{N} < \infty \, .
  \end{equation}
    In particular, we have that
    \begin{equation}
    \overline{\mathcal{H}}[M_N|\bar{\mu}^{\otimes N}] \le \frac{C}{N}, \qquad \overline{\mathcal{H}}[\bar{\mu}^{\otimes N}|M_N]\le \frac{C}{N} \, .
  \end{equation}
  Moreover, if $\bar{\mu}$ satisfies a Talagrand inequality, 
  \begin{equation}
    \frac{1}{N}d_2^2(M_N,\bar{\mu}^{\otimes N})\le \frac{C}{N} \, .
  \end{equation}
\end{theorem}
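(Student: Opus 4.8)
The plan is to recognise the partition function $Z_N$ of this section as exactly the object controlled by \cref{thm:main}, and then to read off the two relative-entropy bounds, the Talagrand bound following by tensorisation.

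First I would record, via the Euler--Lagrange equation for $\bar\mu$ already used above, that $M_N(\dx x)=Z_N^{-1}e^{-\mathring{I}_W[\eta^N_x]}\dx\bar\mu^{\otimes N}(x)$ with $Z_N=\int_{\Omega^N}e^{-\mathring{I}_W[\eta^N_x]}\dx\bar\mu^{\otimes N}(x)$. Since $\bar\mu$ has essentially bounded density, this is precisely $Z_{N,1}$ in the notation of \cref{thm:main} with base measure $\bar\rho=\bar\mu$, so $1\le Z_N\le C$ with $C$ independent of $N$; the same theorem with $\beta=2$ gives in addition $Z_{N,2}=\int_{\Omega^N}e^{-2\mathring{I}_W[\eta^N_x]}\dx\bar\mu^{\otimes N}(x)\le C$ (the lower bound $Z_N\ge1$ is also immediate from Jensen's inequality together with the cancellation \eqref{eq:cancellation}). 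This is the first claim. The bound $\overline{\mathcal{H}}[\bar\mu^{\otimes N}|M_N]\le C/N$ then follows at once from the identity $\mathcal{H}[\bar\mu^{\otimes N}|M_N]=\log Z_N$ derived above: dividing by $N$ and using $Z_N\le C$ gives $\overline{\mathcal{H}}[\bar\mu^{\otimes N}|M_N]\le N^{-1}\log C$.

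For $\overline{\mathcal{H}}[M_N|\bar\mu^{\otimes N}]$ I would start from \eqref{eq:entropybound}. Using $Z_N\ge 1$, so that $Z_N^{-1}\le 1$ and $-\log Z_N\le 0$, that bound collapses to
\[
\mathcal{H}[M_N|\bar\mu^{\otimes N}]\le \int_{\Omega^2}W^2(x,y)\,\dx\bar\mu(x)\dx\bar\mu(y)+Z_{N,2}\le \int_{\Omega^2}W^2\,\dx\bar\mu\,\dx\bar\mu+C,
\]
so it only remains to bound $\int_{\Omega^2}W^2\,\dx\bar\mu\,\dx\bar\mu$ by a constant depending on $\|\dx\bar\mu/\dx x\|_{L^\infty(\Omega)}$ alone. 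This I would extract from \cref{ass:potential}: writing $W=W_{\eps_0}+(W-W_{\eps_0})$ for a fixed $\eps_0\in(0,1)$, the Besov estimate \eqref{eq:Besovestimate} with $p=2$ controls $\sup_{x\in\Omega}\|W(x,\cdot)-W_{\eps_0}(x,\cdot)\|_{L^2(\Omega;\bar\mu)}$, while $W_{\eps_0}=P_{\eps_0}W$ is a fixed-time heat smoothing of an $L^1_{\rm loc}$ kernel and hence controlled in $L^2(\Omega;\bar\mu)$; together this gives $\sup_{x\in\Omega}\|W(x,\cdot)\|_{L^2(\Omega;\bar\mu)}<\infty$, and therefore $\int_{\Omega^2}W^2\,\dx\bar\mu\,\dx\bar\mu\le\sup_{x}\|W(x,\cdot)\|_{L^2(\Omega;\bar\mu)}^2<\infty$. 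Hence $\mathcal{H}[M_N|\bar\mu^{\otimes N}]$ is bounded uniformly in $N$, and dividing by $N$ yields $\overline{\mathcal{H}}[M_N|\bar\mu^{\otimes N}]\le C/N$.

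Finally, for the Talagrand bound I would use the tensorisation of the quadratic transport--entropy inequality: if $d_2^2(\nu,\bar\mu)\le C_T\,\mathcal{H}[\nu|\bar\mu]$ for all $\nu\in\cP_2(\Omega)$, then $d_2^2(\nu^N,\bar\mu^{\otimes N})\le C_T\,\mathcal{H}[\nu^N|\bar\mu^{\otimes N}]$ for all $\nu^N\in\cP_2(\Omega^N)$, where $\Omega^N$ carries the $\ell^2$-product metric; taking $\nu^N=M_N$ and the bound just proved gives $\tfrac1N d_2^2(M_N,\bar\mu^{\otimes N})\le\tfrac{C_T}{N}\mathcal{H}[M_N|\bar\mu^{\otimes N}]\le C/N$. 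Given \cref{thm:main}, I do not anticipate any real obstacle here; the only point needing genuine care is the rigorous justification of the algebraic manipulations behind \eqref{eq:entropybound} — the Euler--Lagrange rewriting of $M_N$, the $\bar\mu^{\otimes N}$-integrability and cancellation \eqref{eq:cancellation} of $\mathring{I}_W[\eta^N_x]$, and the Cauchy--Schwarz step — all of which reduce to the moment bound $W\in L^2(\bar\mu\otimes\bar\mu)$ established above. The entire analytic content sits in \cref{thm:main}, and this corollary is bookkeeping together with that single moment estimate.
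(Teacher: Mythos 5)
Your proposal is correct and follows essentially the same route as the paper, which itself only sketches this argument (deferring details to the follow-up work): identify $Z_N$ with $Z_{N,1}$ for the base measure $\bar\mu$ via the Euler--Lagrange equation, read off $\overline{\mathcal{H}}[\bar\mu^{\otimes N}|M_N]=N^{-1}\log Z_N$, control $\overline{\mathcal{H}}[M_N|\bar\mu^{\otimes N}]$ through \eqref{eq:entropybound} using $Z_{N}\ge 1$, $Z_{N,2}\le C$ and a second-moment bound on $W$, and tensorise Talagrand. Your additions — the Besov-based bound on $\int_{\Omega^2}W^2\,\dx{\bar\mu}\,\dx{\bar\mu}$ and the explicit tensorisation step — simply fill in details the paper labels ``straightforward'' or omits.
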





\appendix
\appendix
\section{Estimates for the repulsive log gas} 
\label{sec:estimates_for_the_repulsive_log_gas}
In this section, we will check that the repulsive log gas satisfies~\cref{ass:potential0,ass:potential}. We have the following result:
\begin{lemma}
  Let $d\geq 1$ and define $F:\T^d \to \R$
  \begin{equation}
    F(x)\coloneqq(-\Delta)^{-\frac d2} (\delta_0 -1) \coloneqq \sum_{k \in \Z^d\setminus \{0\}}\frac{1}{|2 \pi k|^d}e^{i 2 \pi k\cdot x} \, , 
  \end{equation}
  and set $W(x,y)=F(x-y)$. Then, $W$ satisfies~\cref{ass:potential0,ass:potential}. 

  Similarly, for  $d\geq 1$, define $W:\R^d \times \R^d \to \R$
  \begin{equation}
    W(x,y)=-\ln (|x-y|) \, .
  \end{equation}
  Then, again $W$ satisfies ~\cref{ass:potential0,ass:potential}.
\end{lemma}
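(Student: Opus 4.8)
The plan is to verify the three conditions of \cref{ass:potential}, together with the repulsivity/H-stability of \cref{ass:potential0}, for the two model potentials, reducing everything to standard heat-kernel and Fourier estimates. I would treat the torus case in detail and then explain that the $\R^d$ logarithm case follows by the same arguments after localising (since $-\ln|x-y|$ differs from the torus Green's function by a smooth remainder on bounded sets, and both $\bar\rho$ and $\eta$ have compact support considerations are moot because one only needs pointwise/local estimates).

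\textbf{Repulsivity.} For $W(x,y)=F(x-y)$ on $\T^d$, the Fourier multiplier of $F$ is $|2\pi k|^{-d}\geq 0$ for $k\neq 0$ and the heat semigroup $P_\eps$ acts as multiplication by $e^{-\eps|2\pi k|^2}$ (use $P_\eps^{1/2}$, i.e.\ $e^{-\eps|2\pi k|}$, when $d=1$). Hence for any mean-zero finite signed measure $\eta$ with Fourier coefficients $\hat\eta(k)$ (and $\hat\eta(0)=0$),
\begin{equation}
  \int_{\T^{2d}} (P_\eps W)(x,y)\,\dx\eta(x)\dx\eta(y) = \sum_{k\neq 0} \frac{e^{-\eps|2\pi k|^2}}{|2\pi k|^{d}}\,|\hat\eta(k)|^2 \geq 0\,.
\end{equation}
For $-\ln|x-y|$ on $\R^d$ one uses that $-\ln|\cdot|$ (or its $d$-dimensional analogue) is, up to a positive constant and a harmless additive/smooth piece, the Riesz potential kernel $|\xi|^{-d}$ on the Fourier side, which is again nonnegative; since $P_\eps$ has a nonnegative Fourier multiplier the same positivity argument applies. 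Lower semicontinuity and $L^1_{\rm loc}$ are immediate.

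\textbf{Conditions (1)–(3) of \cref{ass:potential}.} For the logarithmic upper bound, $(P_\eps W)(x,x)=\sum_{k\neq 0}e^{-\eps|2\pi k|^2}|2\pi k|^{-d}$, and comparing the sum with the integral $\int_{|\xi|\geq 1}e^{-\eps|\xi|^2}|\xi|^{-d}\dx\xi \sim |\ln\eps|$ gives \eqref{eq:logarithmicbound}; translation invariance makes the bound uniform in $x$. For the Besov estimate \eqref{eq:Besovestimate}, I would write $(P_\eps-\id)W(x,\cdot)=((p_\eps-\id)F)(x-\cdot)$ and estimate $\|(p_\eps-\id)F\|_{L^p(\T^d)}$; since $F\in \dot B^{d/p}_{p,\infty}$ for every $p\in(d,\infty)$ (which follows from $\widehat F(k)=|2\pi k|^{-d}$ by a Littlewood--Paley computation) one gets $\|(p_\eps-\id)F\|_{L^p}\lesssim \eps^{\kappa/p}$ with $\kappa>0$ depending on $p$; then bound the $L^p(\bar\rho)$ norm by $\|\tfrac{\dx\bar\rho}{\dx x}\|_\infty^{1/p}$ times the $L^p(\dx x)$ norm, which is exactly the stated dependence of the constant. (For $p\leq d$ one can use a slightly larger exponent and Jensen, or simply note the claim is only needed for large $p$.) Finally, quantified super-harmonicity \eqref{superharmonicity}: away from the diagonal $F$ is smooth, and one checks $-\Delta_x W(x,y)=-\Delta F(x-y)=1-\delta_0(x-y)\geq -1$ in the torus case (harmonic off the diagonal, and superharmonic at it), which via the heat-semigroup representation $W-P_\eps W=-\int_0^\eps P_s(-\Delta W)\,\dx s$ — interpreted appropriately, or just using the pointwise super/sub-harmonicity and the remark following \cref{ass:potential} with the integrability $\sup_y\|\min\{-\Delta_x W(x,y),0\}\|_{L^p}<\infty$ for $p>d/2$ — yields $W-P_\eps W\geq -K\eps^\alpha$ with $\alpha=2-d/p$ (respectively $\alpha=1-d/p$ with $(-\Delta)^{1/2}$ when $d=1$).

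\textbf{Main obstacle.} The delicate point is the Besov estimate \eqref{eq:Besovestimate}: one must extract the sharp power $\eps^\kappa$ from $\|(p_\eps-\id)F\|_{L^p}$ and track that the constant depends only on $p$ and $\|\tfrac{\dx\bar\rho}{\dx x}\|_\infty$. I would do this cleanly via Littlewood--Paley: on the $j$-th dyadic block $\Delta_j F$ has $L^p$ norm $\lesssim 2^{-jd/p}$ (this is precisely the $\dot B^{d/p}_{p,\infty}$ membership), the multiplier $e^{-\eps|2\pi k|^2}-1$ is $O(\min\{1,\eps 2^{2j}\})$ on block $j$, and summing $\sum_j \min\{1,\eps 2^{2j}\}2^{-jd/p}$ gives $\eps^{\kappa}$ with, say, $\kappa=\tfrac{d}{2p}$ (any positive exponent suffices). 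The one-dimensional variant requires the same computation with $(-\Delta)^{1/2}$, which only changes the multiplier to $e^{-\eps|2\pi k|}-1$ and the super-harmonicity exponent accordingly; this is why the paper replaces $P_\eps$ by $P_\eps^{1/2}$ when $d=1$. For the $\R^d$ logarithm, one writes $-\ln|x-y| = G(x-y) + r(x-y)$ where $G$ is the torus Green's function restricted to a neighbourhood of $0$ and $r$ is smooth, so every estimate transfers verbatim on bounded domains, and the confinement ensures everything is controlled.
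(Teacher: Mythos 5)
Your treatment of repulsivity, of the logarithmic diagonal bound, and of the Besov estimate follows the paper's strategy. For \eqref{eq:Besovestimate} you take a mildly different route: a direct Littlewood--Paley multiplier computation, bounding $e^{-\eps|2\pi k|^2}-1$ by $\min\{1,\eps 2^{2j}\}$ on the $j$-th block and summing $\sum_j\min\{1,\eps 2^{2j}\}2^{-jd/p}\lesssim\eps^{d/(2p)}$, whereas the paper writes $(p_\eps-\id)F$ as an average of translates, applies Jensen against the heat kernel, and invokes the modulus-of-continuity characterisation of $\dot B^{d/p}_{p,\infty}$ from \cite[Theorem 2.36]{BCD11}. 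Both arguments rest on the same Bernstein bound $\lVert\dot\Delta_j F\rVert_{L^p}\lesssim 2^{-jd/p}$ and yield the same power $\eps^{d/2}$ for the $p$-th power of the norm, so this part is a legitimate alternative.

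The genuine gap is in the super-harmonicity condition \eqref{superharmonicity} for the torus kernel when $d\geq 3$. You assert $-\Delta_x W(x,y)=1-\delta_0(x-y)$: besides the sign slip (one has $-\Delta F=\delta_0-1$, and $F$ is \emph{not} harmonic off the diagonal on $\T^d$, where $-\Delta F\equiv-1$), this identity is special to $d=2$. For $d\geq3$ the relevant object is $-\Delta F=(-\Delta)^{\frac d2-1}(\delta_0-1)$, a nonlocal distribution, and showing that its negative part is bounded (equivalently, the integrability condition $\sup_y\lVert\min\{-\Delta_x W(x,y),0\}\rVert_{L^p}<\infty$ from the remark you invoke) is exactly the nontrivial step. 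The paper handles it via the subordination formula
\begin{equation}
(-\Delta)^{\frac d2-1}(\delta_0-1)=\frac{1}{\Gamma(\tfrac d2-1)}\int_0^\infty s^{\frac d2-2}\,(p_s-1)\,\dx{s}\,,
\end{equation}
splitting the integral at $s=1$ and using $p_s\geq0$ together with the uniform exponential convergence $p_s\to1$ on $\T^d$; some such argument is needed before your verification covers all $d\geq1$. A secondary soft spot: for the $\R^d$ logarithm you propose to reduce to the torus by localisation, but \cref{ass:potential0} quantifies over arbitrary finite signed mean-zero measures on $\R^d$ and \eqref{eq:Besovestimate} integrates over all of $\R^d$ against $\bar\rho$; the paper instead works directly on $\R^d$ (Fourier transform of $-\ln|\cdot|$ plus approximation by Schwartz functions for H-stability, and the explicit formulas $\Delta\ln|x|=2\pi\delta_0$ for $d=2$ and $(d-2)|x|^{-2}$ for $d\geq3$ for super-harmonicity). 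Your localisation is plausible for the pointwise/local estimates but does not, as stated, give the H-stability on $\R^d$.
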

\begin{proof}
 We will use $p_\eps$ to denote the heat semigroup on $\R^d$ or $\T^d$. The proof of ~\cref{ass:potential0} for $F$ is straightforward since the Fourier coefficients of $F$ are all non-negative. $F$ also satisfies~\cref{ass:potential} (a), since
  \begin{equation}
    \lVert P_\eps F \rVert_{L^\infty(\T^d)} \leq \sum_{k \in \Z^d \setminus \{0\}}\frac{e^{-|2\pi k|^2 \eps^2}}{|2 \pi k|^d} \leq C_0 (1 + |\ln (|\eps|)|)\, .
  \end{equation}
  We now move on the~\cref{ass:potential} (b). For any $\mu$ with bounded density and $p>d$, we have the bound
  \begin{align}
    \lVert p_\eps W(x,\cdot) - W(x,\cdot) \rVert_{L^p(\T^d;\bar{\rho})}^p \leq & \, \left\lVert \frac{\dx{\bar{\rho}}}{\dx{x}}\right \rVert_{L^\infty(\T^d)} \lVert p_\eps F - F \rVert_{L^p(\T^d)}^p \\
    =& \, C \int_{\T^d}\left | \int_{\T^d} p_\eps(y)(F(y-x)-F(x)) \, \dx{y}\right|^p \, \dx{x}\\
    \leq & \,\int_{\T^d}p_\eps(y) \int_{\T^d} |F(y-x)-F(x)|^p \, \dx{y} \, \dx{x}\\
    \leq & \, C \int_{\T^d}|y|^{ d}p_\eps(y)\dx{y} \lVert F\rVert_{\dot{B}_{p,\infty}^{\frac{d}{p}}}^p  \leq C \eps^{\frac{d}{2}}\lVert F\rVert_{\dot{B}_{p,\infty}^{\frac{d}{p}}}^p\, ,
  \end{align}
  where in the penultimate inequality we have applied~\cite[Theorem 2.36]{BCD11}. The homogeneous Besov norm  which shows up on the right hand side can then be expressed as follows
  \begin{equation}
    \lVert F\rVert_{\dot{B}_{p,\infty}^{\frac{d}{p}}} = \sup_{j \in \Z} 2^{jd/p} \lVert \dot{\Delta}_j F \rVert_{L^p(\T^d)}\, .
  \end{equation}
   Note now that by~\cite[Lemma 2.1]{BCD11}
  \begin{align}
    \lVert \dot{\Delta}_j F \rVert_{L^p(\T^d)} \leq \, C 2^{j d(\frac{1}{2}-\frac1p)} \lVert \dot{\Delta}_j F \rVert_{L^2(\T^d)} \leq  \, C2^{-jd/p} \, ,
  \end{align}
  from which the result follows. We are left to check that~\cref{ass:potential} (c) is satisfied. For this, we note that
  \begin{align}
       &W(x,y)-(P_\eps)W(x,y)=\, -\int_0^\eps \frac{\dx}{\dx{s}}(p_sF)(x-y)\, \dx{s} =\,  -\int_0^\eps (p_s\Delta F)(x-y)\, \dx{s}\, .
  \end{align}
  If $d=2$, the above quantity is clearly non-negative. A similar argument works in $d=1$ with $P_\eps,\,p_\eps$ replaced by $P_\eps^{\frac12},\,p_\eps^{\frac12}$. For $d>2$, we have that 
  \begin{align}
      -\Delta F = & \, (-\Delta)^{\frac d2-1} (\delta_0 -1)\\
      = &\,\frac{1}{\Gamma(\frac d2 -1)}\int_0^\infty s^{\frac d2 -2} (p_s(x)-1) \, \dx{s}\\
      = & \, \frac{1}{\Gamma(\frac d2 -1)}\int_0^1 s^{\frac d2 -2} (p_s(x)-1) \, \dx{s} + \frac{1}{\Gamma(\frac d2 -1)}\int_1^\infty s^{\frac d2 -2} (p_s(x)-1) \, \dx{s} \, \\
      \geq &\, \frac{1}{\Gamma(\frac d2 -1)} - C_d\frac{1}{\Gamma(\frac d2 -1)}\int_1^\infty s^{\frac d2 -2} e^{-\lambda_d s} \, \dx{s}\geq -C\, ,
  \end{align}
  where we have used both that $p_s$ is non-negative and that it converges exponentially to $1$ uniformly in $x\in \T^d$. This completes the proof of~\cref{ass:potential0} (3).

  We now move on the second part of the lemma, i.e. the choice $W(x,y)=-\ln(|x-y|)$. We start as before with~\cref{ass:potential0}.  Fix $\eta$ to be a finite signed measure with $\eta(\R^d)=0$ and let $(\eta_n)_{n \in \N}$ be a family of Schwartz functions which converge weakly to $\eta$ and have zero mean $\eta_n(\R^d)=0$ such that $I_{P_\eps W}[\eta_n]$ converges to $I_{P_\eps W}[\eta]$. Then, using the fact that,
  \begin{align}
    I_{P_\eps W}[\eta_n]=C_d\int_{\R^d}\frac{e^{-|2\pi k |^2 \eps^2}}{|k|^{d}}|\hat{\eta}_n(k)|^2 \, \dx{k} \geq 0\, ,
  \end{align}
  for some $C_d\in (0,\infty)$, we have that $I_{P_\eps W}[\eta]\geq 0$.

  We now move on to~\cref{ass:potential} (1). We have
  \begin{equation}
    |P_\eps W|(x,x) \leq \int_{\T^d}p_\eps (x)|\ln (|x|)|\dx{x}\leq C_0(1+ |\ln|\eps||)\, .
  \end{equation}
  For~\cref{ass:potential} (2), we similarly have
  \begin{align}
    \lVert P_\eps W(x,\cdot) - W(x,\cdot) \rVert_{L^p(\R^d;\bar{\rho})}^p &\leq  \left\lVert \frac{\dx{\bar{\rho}}}{\dx{x}}\right \rVert_{L^\infty(\T^d)} \int_{\R^d}\left|\int_{\R^d}p_\eps(x-y)\ln(|y|/|x|)\, \dx{y}\right|^p\, \dx{x}\\&\leq \, C\eps^\kappa \, ,
  \end{align}
  for some $\kappa>0$. Finally, we treat~\cref{ass:potential} (3). Note that
  \begin{eqnarray*}
      \, W(x,y)-(P_\eps)W(x,y)&=&\,\displaystyle -\int_0^\eps \frac{\dx}{\dx{s}}(P_s W)(x,y)\, \dx{s} \\
    &=&\displaystyle\,\int_0^\eps \frac{\dx}{\dx{s}}(p_s (\ln(|\cdot|)))(x-y)\, \dx{s} \\
    &=&\displaystyle\, \int_0^\eps (\Delta p_s (\ln(|\cdot|)))(x-y)\, \dx{s}\\
    &=&\displaystyle\,
    \begin{cases}
      2\pi \int_0^\eps p_s (x-y)\, \dx{s}& d=2 \\
      \int_0^\eps p_s \left(\frac{d-2}{|\cdot|^{2}}\right)(x-y)& d\geq 3
    \end{cases} \\
    &\geq & \, 0 \,.
  \end{eqnarray*}
 Again, a similar argument works for $d=1$  with $P_\eps,p_\eps$ replaced by $P_\eps^{\frac12},p_\eps^{\frac12}$.

\end{proof}

\section*{Acknowledgements}
The authors would like to thank P-E Jabin and M Rosenzweig for stimulating discussions and pointing out the related reference of Grotto and Romito. The authors would also like to thank S. A. Smith for directing us to Nelson's original argument and G. Pavliotis for constant encouragement.  The research of MGD was partially supported by NSF-DMS-2205937. The research of RSG was partially supported by the Deutsche Forschungsgemeinschaft through the SPP 2410/1 \emph{Hyperbolic Balance Laws in Fluid Mechanics: Complexity, Scales, Randomness}.

\bibliographystyle{amsplainabbrv}
\bibliography{ref}
\end{document}